\providecommand{\U}[1]{\protect\rule{.1in}{.1in}}
\newtheorem{theorem}{Theorem}[section]
\newtheorem{definition}[theorem]{Definition}
\newtheorem{assumption}[theorem]{Assumption}
\newtheorem{lemma}[theorem]{Lemma}
\newtheorem{proposition}[theorem]{Proposition}
\newtheorem{remark}[theorem]{Remark}
\newenvironment{proof}[1][Proof]{\noindent\textbf{#1.} }{\ \rule{0.5em}{0.5em}}
\numberwithin{equation}{section}
\begin{document}

\title{The perturbation method applied to a robust optimization problem with constraint}
\author{Peng Luo\thanks{School of Mathematical Sciences, Shanghai Jiao Tong University, Shanghai 200240, China. E-mail: {\tt peng.luo@sjtu.edu.cn}}
	\and Alexander Schied\thanks{Department of Statistics and Actuarial Science, University of Waterloo, ON, N2L 3G1, Canada. E-mail: {\tt aschied@uwaterloo.ca} }
	\and Xiaole Xue\thanks{School of Management, Shandong University, Jinan 250100, China. E-mail: {\tt xlxue@sdu.edu.cn(Corresponding author) }\hfill\break The authors gratefully acknowledge financial support from the Natural Sciences and Engineering Research Council of Canada through grant RGPIN-2017-04054. Peng Luo gratefully acknowledges the support from the National Natural Science
		Foundation of China (Grant No. 12101400). Xiaole Xue gratefully acknowledges the support from National Natural Science Foundation of China  (No.12001316), ``The
		Fundamental Research Funds of Shandong University".}
}
\maketitle

\textbf{Abstract}. {The present paper studies a kind of robust optimization problems with constraint. The problem is formulated through Backward Stochastic Differential Equations (BSDEs) with quadratic generators. A necessary condition is established for the optimal solution using a terminal perturbation method and  properties of Bounded Mean Oscillation (BMO) martingales. The necessary condition is further proved to be sufficient for the existence of an optimal solution under an additional convexity assumption. Finally, the optimality condition is applied to discuss problems of partial hedging with ambiguity, fundraising under  ambiguity and randomized testing problems for a quadratic $g$-expectation.}

{\textbf{Key words}. } {Backward stochastic differential equations; Terminal perturbation; Robust optimization.}

\textbf{AMS subject classifications.} 93E20, 60H10, 35K15\addcontentsline{toc}{section}{\hspace*{1.8em}Abstract}

\section{Introduction}
In financial markets, an investor usually needs to maximize over a set of prior measures and then minimize a cost criterion. This can be formulated as a robust optimization problem as follows
\begin{equation}\label{1st eq}
\inf_{\xi}\sup_{Q} U(\xi, Q)
\end{equation}
where $U(\xi,Q)$ is a cost functional based on an expectation under the probability measure $Q$.
There are two main ways to deal with this robust 
optimization problem. One is based on duality techniques, the other one  on stochastic optimal control. While the latter method typically requires the problem \eqref{1st eq} to be time-consistent, it often leads to more concrete characterizations of solutions, which are suitable for numerical computations. 

A  finite-horizon robust control criterion was characterized by Skiadas  \cite{Skiadas03} via a backward stochastic differential equation (BSDE).  Bordigoni et al.  \cite{Bordogoni07} proved the existence of a unique $Q^{*}$, and the value function is characterized by the unique solution of a generalized 
BSDE with a quadratic generator. In  \cite{Faidi2011},  Faidi et. al. studied a maximization problem for terminal wealth and consumption for a class of robust utility functions. The dynamic maximum principle for the optimal control is studied, and the existence and the uniqueness of the consumption-investment strategy is characterized by the unique solution of a forward-backward system.   Schroder and Skiadas  
\cite{SchroderSkiadas03} studied the lifetime  consumption-portfolio recursive utility problem under convex trading 
constraints by the utility gradient approach. 
The first-order conditions of optimality are obtained through a constrained forward backward stochastic differential equation. 
Recently, Faidi et al.  \cite{Faidi2019} studied a general robust utility maximization problem under terminal wealth 
and consumption with state constraints.
The existence and the uniqueness of the optimal strategy are obtained by 
studying the associated quadratic BSDE.  Additional references and a survey of related problems can be found in  Sections 8 and 9 of \cite{FoellmerSchiedWeber}.

The concept of $g$-expectations, which was first introduced by Peng \cite{peng1997}, has some intriguing relationship with various  kinds of risk measures
(see  \cite{chenap, gianin, hehuchen, jiang} for example).
The
$g$-expectation has been an important tool 
in economics, finance and insurance; see, e.g., \cite{bernard, cheneco,chenime, yongzhou} and the references therein.
In particular,  under some suitable conditions,
it is shown in \cite{karoui} that the following objective function
\[
\sup\limits_{Q\ll P} E^{Q}[f(\xi)-H(Q|P)]
\]
equals   $\mathcal{E}_{g}[f(\xi)]$,
where $H(Q|P)$ is the relative entropy and $\mathcal{E}_{g}[\cdot]$ is a quadratic $g$-expectation (the definition of quadratic $g$-expectations will be recalled in Section 2).
Thus, the robust optimization problem related to the above objective function can be characterized as
$$ \inf_{\xi}\mathcal{E}_{g}[f(\xi)].$$ 
With different constraints on $\xi$,  optimization problems of this kind arise naturally in applications to  finance and statistics such as partial hedging, liability minimization, and  randomized test theory for composite hypotheses.

Under an convexity condition, El Karoui, Peng and Quenez \cite{karoui2001} applied  the penalty method  and obtained a maximum principle, which is a necessary condition for optimality.  Ji and Peng \cite{jipeng}  first introduced the terminal perturbation method to a problem without convexity conditions and obtained an elegant result  by  employing Ekeland's variational principle. 
Ji and Zhou \cite{jizhou} applied this method  to  a hypothesis testing problem and obtained a Neyman--Pearson-type lemma  for $g$-probabilities.
We also refer to Bernard, Ji, and Tian \cite{bernard} and Ji \cite{Ji-jmaa} for respective applications of the perturbation method in insurance and mean-variance portfolio. 

In this paper, we study a  robust optimization problem with constraints. The problem is formulated  through BSDEs with quadratic generators,  and the objective is to find a terminal condition    minimizing   a quadratic $g$-expectation subject to a specific constraint. More precisely, 
the robust optimization problem with constraint is formulated as follows:
\begin{equation*}
\left\{
\begin{aligned} \inf\limits_{\xi\in U} & \ \mathcal{E}_{f}[h(\xi)+\alpha\mathcal{E} _{g}[\xi]]\\ s.t. & \ \mathcal{E}_{g}[\xi]\leq\pi_{0}, \end{aligned} \right.
\end{equation*}
where  $\mathcal{E}_{f}[\cdot]$ and  $\mathcal{E}_{g}[\cdot]$ are quadratic $g$-expectations, $\alpha$ and $\pi_{0}$ are two constants,  and the admissible set $U$ is given by
\[
U=\{\xi\in L^{\infty}(\mathcal{F}_{T});X\leq\xi\leq Y\}
\]
with $X, Y\in L^{\infty}(\mathcal{F}_{T})$ and $X\leq Y$.   

 We use the terminal perturbation to obtain the necessary conditions for  solutions of the above optimization problems. In 	contrast to the case of generators satisfying a  linear growth condition, as discussed in  Ji and Peng \cite{jipeng} and  Ji and Zhou \cite{Ji-Zhou-PT}, we only require a quadratic growth condition. This relaxed assumption creates additional difficulties, for instance through the need of obtaining a priori estimates for variational equations.  These difficulties are overcome by employing some delicate properties of  BMO martingales.  To our best knowledge, this is the first  study of the  robust optimization problem  using  the terminal  perturbation method for  quadratic generators. 
We characterize  the optimal strategy  by  a forward-backward system. Under an additional convexity assumption, we furthermore show that our above-mentioned necessary condition is also sufficient for the existence of an optimal solution. Moreover,  our results have applications in finance, actuarial science and statistics. This is illustrated by concrete applications to a  partial hedging problem,  a  fundraising problem under ambiguity, and the derivation of  a Neyman--Pearson lemma for quadratic $g$-probabilities.

The paper is organized as follows. In section 2, we present our proposed robust optimization problem with constraint, necessary and sufficient conditions are obtained. Section 3 is devoted to present some concrete applications.

\section{Problem formulation} \label{sec:problem_formulation}

\subsection{Notation}
Let $W=(W_{t})_{t\geq0}$ be a $d$-dimensional Brownian motion on a probability space $(\Omega, \mathcal{F}, 
P)$, and denote by $\{\mathcal{F}_{t}\}_{t\geq0}$ the complete filtration generated by $W$. Throughout the paper, 
we fix a finite time horizon $[0, T]$ with $T>0$. We endow $\mathbb{R}^{n}$ with its Borel $\sigma$-algebra $
\mathcal{B}(\mathbb{R}^{n})$.   For an event $A\in\mathcal{F}$, $I_A$ denotes a Bernoulli random variable 
with $I_A(\omega)=1$ for $\omega\in A$ and $I_A(\omega)=0$ otherwise. Equalities and inequalities between 
random variables and processes are in the sense of $P$-a.s. and $P\otimes dt$-a.e., respectively. The 
Euclidean norm is denoted by $|\cdot|$. For $m\in[1,\infty]$, we denote by $\|\cdot\|_{m}$ the usual $L^{m}$-norm, by 
$L^{m}(\mathcal{F}_{t})$ the set of real-valued $\mathcal{F}_{t}$-measurable random variables $X$ such that $\|X\|_{m}<\infty$. 
By   $\mathcal{S}^{m}$ we denote the set of real-valued continuous $\{ \mathcal{F}_{t}\}_{t\geq0}$-adapted 
processes $Y$ on $[0,T]$ such that
\[
\|Y\|_{\mathcal{S}^{m}}:=\left\| \sup_{0\leq t\leq T} |Y_{t}|\right\| _{m}<
\infty .
\]
By $\mathcal{H}^{m}$ we denote the set of $\{ \mathcal{F}_{t}\}_{t\geq0}$-adapted $\mathbb{R}^{d}$-valued processes $Z$ such that
\[
\|Z\|_{\mathcal{H}^{m}}=\left\| \left( \int_{0}^{T}|Z_{s}|^{2}ds\right)
^{\frac{1}{2}}\right\| _{m}<\infty .
\]
The space $\text{\textrm{BMO}}$ consists of all $\{ \mathcal{F}_{t}\}_{t\geq 0}$-adapted $\mathbb{R}^{d}$-valued processes $Z$ such that
\[
\|Z\|_{\text{\textrm{BMO}}}=\sup_{\tau\in\mathcal{T}}\Bigg\|E\left[ \left(
\int_{\tau}^{T}|Z_{s}|^{2}ds\right) ^{\frac{1}{2}}\Bigg|\mathcal{F}_{\tau
}\right] \Bigg\|_{\infty}<\infty,
\]
where $\mathcal{T}$ is the set of all stopping times with values in $[0,T]$.

\subsection{ Assumptions and model formulation}

Our constrained optimization problem is formulated through $g$-expectations and quadratic BSDEs. Before introducing the specific formulation of the problem, we first introduce some assumptions on the generator functions of the BSDEs and the definition of the $g$-expectation.

\begin{assumption}
	\label{ass-qua} The function $g:\Omega\times[0,T]\times\mathbb{R}\times\mathbb{R}^{d}\rightarrow\mathbb{R}$ is progressively measurable and $g(\omega,t,\cdot,\cdot)$ is continuously differentiable for all $(\omega,t)\in\Omega\times[0,T]$. Moreover, there exists a constant $C>0$ such that for all $t\in[0,T]$ and any $y,\bar{y}\in\mathbb{R},z,\bar{z}\in\mathbb{R}^{d}$,
	\begin{align*}
	& |g(t,y,z)|\leq C(1+|y|+|z|^{2}),~~|g_{y}(t,y,z)|\leq C, ~~|g_{y}%
	(t,y,z)-g_{y}(t,\bar{y},\bar{z})|\leq C(|y-\bar{y}|+|z-\bar{z}|),\\
	& ~~~~|g_{z}(t,y,z)|\leq C(1+|z|),~~\text{and}~~|g_{z}(t,y,z)-g_{z}(t,\bar
	{y},\bar{z})|\leq C(|y-\bar{y}|+|z-\bar{z}|).
	\end{align*}
	
\end{assumption}

\begin{assumption}
	\label{ass-ini} The function $g$ satisfies $g(t,y,0)=0$ $P$-a.s., for all $t\in[0,T]$, $ y\in\mathbb{R}$.
\end{assumption}

\begin{assumption}
	\label{ass-h} The map $h:\Omega\times\mathbb{R}\rightarrow\mathbb{R}$ is  such that for all $x\in\mathbb{R}$, $h(\cdot,x)$ is $\mathcal{F}_{T}$-measurable and for all $\omega\in\Omega$, $h(\omega,\cdot)$ is continuously differentiable.  
	There exists a constant $C>0$ such that $|h(0)|\leq C$,  $|h_{x}(x)|\leq C$, and $|h_{x}(x)-h_{x}(y)|\leq C|x-y|$  for all $x,y\in\mathbb{R}$.
\end{assumption}

\begin{lemma}
	[\cite{briand}]\label{exist_lemma}
	For a function $g$ satisfying Assumption \ref{ass-qua} and a random variable  $\xi\in L^{\infty}(\mathcal{F}_{T})$,   the BSDE
	\begin{equation}
	\label{bsde1}y(t)=\xi+\int_{t}^{T}g(s,y(s),z(s))ds-\int_{t}^{T}z(s)dW_{s}%
	\end{equation}
	admits a unique solution $(y,z)\in\mathcal{S}^{\infty}\times\text{\textrm{BMO}}$. Moreover, there exists a constant $L$ that only depends on $\xi$ and  the constant $C$ from Assumption~\ref{ass-qua} such that
	\[
	\|y\|_{\mathcal{S}^{\infty}}+\|z\|_{\text{\textrm{BMO}}}\leq L.
	\]
	
\end{lemma}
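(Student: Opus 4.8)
The plan is to prove the three assertions—the a priori bound, existence, and uniqueness—in turn, following the by-now classical route for quadratic BSDEs with bounded terminal data; the BMO estimate is the thread running through all three. I would begin with the a priori estimate, assuming a solution in $\mathcal{S}^{\infty}\times\mathrm{BMO}$ exists. A standard comparison/exponential-transform argument based on the two-sided bound $|g(s,y,z)|\le C(1+|y|+|z|^2)$ of Assumption \ref{ass-qua} gives $\|y\|_{\mathcal{S}^{\infty}}\le M$ with $M$ depending only on $\|\xi\|_{\infty}$, $C$ and $T$. For the BMO bound I would then apply Ito's formula to $e^{\gamma y(t)}$, giving
\[
d\,e^{\gamma y}=\gamma e^{\gamma y}z\,dW+\gamma e^{\gamma y}\Big(\tfrac{\gamma}{2}|z|^2-g(s,y,z)\Big)\,ds .
\]
Using $g\ge -C(1+|y|+|z|^2)$ and choosing $\gamma\ge 2C+2$, the drift dominates $\gamma e^{\gamma y}\big(|z|^2-C(1+|y|)\big)$. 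Integrating from a stopping time $\tau$ to $T$, taking $E[\cdot\,|\,\mathcal{F}_{\tau}]$ (the stochastic integral being a genuine martingale since $e^{\gamma y}$ is bounded and $z\in\mathcal{H}^2$), and using $|y|\le M$ yields
\[
E\Big[\int_{\tau}^{T}|z_s|^2\,ds\,\Big|\,\mathcal{F}_{\tau}\Big]\le L'
\]
for a constant $L'$ depending only on $M$, $C$ and $T$; this is exactly the asserted BMO bound, and together with $M$ it gives $\|y\|_{\mathcal{S}^{\infty}}+\|z\|_{\mathrm{BMO}}\le L$.

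For existence I would use the monotone stability (Kobylanski) method. Approximate $g$ by globally Lipschitz generators $g^n$—for instance by truncating the $z$-dependence or by sup/inf-convolution—converging to $g$ monotonically and locally uniformly while preserving the quadratic growth bound uniformly in $n$. Each Lipschitz BSDE has a unique solution $(y^n,z^n)\in\mathcal{S}^2\times\mathcal{H}^2$ by the Pardoux--Peng theory, and the estimate above applies uniformly in $n$, giving $n$-independent $\mathcal{S}^{\infty}$ and BMO bounds. Monotone convergence then provides a limit $y$ with $y^n\to y$; the decisive step is to upgrade this to strong convergence $z^n\to z$ in $\mathcal{H}^2$, which permits passage to the limit in the quadratic term and identifies $(y,z)$ as a solution of \eqref{bsde1}.

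For uniqueness I would linearize. Given two solutions $(y^1,z^1),(y^2,z^2)\in\mathcal{S}^{\infty}\times\mathrm{BMO}$, the difference $(\delta y,\delta z)$ solves a BSDE whose generator, by the mean value theorem and the $C^1$ hypothesis, equals $a_s\,\delta y_s+b_s\cdot\delta z_s$ with $|a_s|\le C$ (since $|g_y|\le C$) and $|b_s|\le C(1+|z^1_s|+|z^2_s|)$ (since $|g_z|\le C(1+|z|)$), so that $b\cdot W$ is a BMO martingale. Because $b\in\mathrm{BMO}$, the stochastic exponential $\mathcal{E}(\int_0^{\cdot}b_s\,dW_s)$ is a true martingale, so Girsanov's theorem defines an equivalent measure $\widetilde{P}$ under which $e^{\int_0^t a_s\,ds}\delta y_t$ is a martingale with terminal value $0$; hence $\delta y\equiv 0$ and then $\delta z\equiv 0$.

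The main obstacle is the limit passage in the existence step: $|z^n|^2$ is only bounded in $L^1$ and not a priori uniformly integrable, so identifying the limit of $g^n(s,y^n,z^n)$ genuinely requires the strong $\mathcal{H}^2$ convergence of $z^n$, which is the technical core of Kobylanski's argument. The robustness of the whole scheme rests on having BMO rather than merely $\mathcal{H}^2$ control of $z$—this is what guarantees, through the stability of the BMO property under Girsanov changes of measure, that the exponential martingale used in the uniqueness proof is genuinely a uniformly integrable martingale.
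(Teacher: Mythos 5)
The paper offers no proof of Lemma \ref{exist_lemma}; it is quoted directly from Briand and Elie \cite{briand}, so the only meaningful comparison is with the method of that reference. Your sketch is a correct outline of the classical Kobylanski route: the $\mathcal{S}^{\infty}$ a priori bound, the exponential-transform BMO estimate obtained by integrating $e^{\gamma y}$ with $\gamma\geq 2C+2$ between a stopping time $\tau$ and $T$, and the linearization-plus-Girsanov uniqueness argument (valid because $|b_s|\leq C(1+|z^1_s|+|z^2_s|)$ makes $b\cdot W$ a BMO martingale, so its stochastic exponential is uniformly integrable) are all sound, and you correctly identify that the only genuinely delicate point is the strong $\mathcal{H}^2$ convergence of $z^n$ in the monotone stability step. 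The cited reference, however, takes a different path by design: Briand and Elie avoid monotone stability entirely and construct the solution directly, solving the BSDE on a sufficiently small time interval by a fixed-point argument in $\mathcal{S}^{\infty}\times\text{\textrm{BMO}}$ (exploiting that the generator is locally Lipschitz in $z$ with a coefficient controlled in BMO) and then pasting intervals together, the uniform estimates of your first step guaranteeing that the interval length does not degenerate. Their approach buys a constructive, self-contained argument with quantitative control and no compactness step; yours buys generality and relies on a well-documented but more technical stability lemma. Both establish the statement, and your constant $L$ depends only on $\|\xi\|_{\infty}$, $C$ and $T$, which is consistent with the lemma since $T$ is fixed throughout the paper.
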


\begin{definition}
	[\cite{ma}]
	Suppose that $g$ satisfies Assumptions \ref{ass-qua} and \ref{ass-ini}. Given $\xi\in L^{\infty}(\mathcal{F}_{T})$, let $(y(\cdot),z(\cdot))\in\mathcal{S}^{\infty}\times\text{\textrm{BMO}}$ be the unique solution of the BSDE (\ref{bsde1}). Then the  $g$-expectation of $\xi$ is defined by
	\[
	\mathcal{E}_{g}[\xi]:=y(0).
	\]
	
\end{definition}

Let $f$ and $g$ be two functions satisfying Assumptions \ref{ass-qua} and \ref{ass-ini} and $h$  be  another function
fulfilling Assumption \ref{ass-h}.  The constrained optimization problem we study in the paper is formulated as follows:
\begin{equation}
\label{g-object}\left\{
\begin{aligned} \inf\limits_{\xi\in U} & \ \mathcal{E}_{f}[h(\xi)+\alpha\mathcal{E} _{g}[\xi]]\\ s.t. & \ \mathcal{E}_{g}[\xi]\leq\pi_{0}, \end{aligned} \right.
\end{equation}
where $\alpha$ and $\pi_{0}$ are two constants,  and the admissible set $U$  is given by
\[
U=\{\xi\in L^{\infty}(\mathcal{F}_{T});X\leq\xi\leq Y\}
\]
with $X, Y\in L^{\infty}(\mathcal{F}_{T})$ and $X\leq Y$.   With these assumptions imposed on $f$, $g$ and $h$, the problem \eqref{g-object} is well defined, since Lemma \ref{exist_lemma} ensures that for any $\xi\in L^{\infty}(\mathcal{F}_{T})$, the  BSDEs
\begin{equation}
\label{bsde object}
\left\{
\begin{aligned}
dy_{1}(t)&=-f(t,y_{1}(t),z_{1}(t))dt+z_{1}(t)dW_{t},\\ y_{1}(T)&=h(\xi)+\alpha\mathcal{E}_{g}[\xi],\\ dy_{2}(t)&=-g(t,y_{2}(t),z_{2}(t))dt+z_{2}(t)dW_{t},\\ y_{2}(T)&=\xi,
\end{aligned}
\right.
\end{equation}
admit a unique solution $(y_{1}, z_{1}, y_{2}, z_{2})\in\mathcal{S}^{\infty}\times\text{\textrm{BMO}}\times\mathcal{S}^{\infty}\times\text{\textrm{BMO}}$. Recalling the comparison theorem for quadratic BSDEs from  \cite[Theorem 2.6]{kobylanski},  we assume that $\mathcal{E}_{g}[X] < \pi_{0} <\mathcal{E}_{g}[Y]$ to avoid the special cases where the constraint in \eqref{g-object} becomes either invalid or unnecessary.
 In fact, the comparison theorem for quadratic BSDEs implies that, if $ \pi_{0}\leq \mathcal{E}_{g}[X]$, then there exists at most  one element (namely $X$) satisfying the constraint; if $ \pi_{0}\geq \mathcal{E}_{g}[Y]$, then all elements in $U$ satisfy the constraint, thus the constraint is immaterial.
\begin{remark}
	The optimization problem~\eqref{g-object} is quite general and encompasses  many models from different topics as special cases. In Section~\ref{sec:app}, we will discuss in detail its applications to topics of partial hedging, liability minimization, and randomized testing,   respectively.
\end{remark}

\subsection{Terminal perturbation method}

In this section, we  study the optimality condition on the optimal solution to problem (\ref{g-object}) by the terminal perturbation method. This method  was introduced by El Karoui, Peng and Quenez \cite{karoui2001} in order to solve an optimization problem for  recursive utility under some convexity conditions.  Later on, it was further developed  to study an insurance design problem \cite{bernard}, a mean-variance portfolio selection problem with non-convex wealth  \cite{Ji-jmaa,jipeng,jizhou}, and to prove a generalized Neyman--Pearson lemma \cite{Ji-Zhou-PT}. 

Suppose that  $\xi^{\ast}\in U$ is an optimal solution to the problem (\ref{g-object}). Further,  let $(y^{*}_{1}, z^{*}_{1}, y^{*}_{2}, z^{*}_{2})\in\mathcal{S}^{\infty}\times\text{\textrm{BMO}}\times\mathcal{S}^{\infty}\times\text{\textrm{BMO}}$ be the unique solution
of \eqref{bsde object} with $\xi=\xi^{*}$. Our first result provides a necessary condition for the optimal solution $\xi^{\ast}$.

\begin{theorem}
	\label{theorem}
	Suppose that $f$ and $g$ satisfy Assumptions~\ref{ass-qua} and \ref{ass-ini} and $h$ obeys Assumption~\ref{ass-h}. Suppose $\xi^{\ast}\in U$ is an optimal solution to (\ref{g-object}), then $\xi^{\ast}$ must be of the following form:
	\begin{align*}
	\xi^{\ast}= & YI_{\{h_{2}m(T)+h_{1}h_{x}(\xi^{\ast})n(T)+\alpha h_{1}
		m(T)E[n(T)]<0\}}+XI_{\{h_{2}m(T)+h_{1}h_{x}(\xi^{\ast})n(T)+\alpha
		h_{1}m(T)E[n(T)]>0\}}\\
	& +bI_{\{h_{2}m(T)+h_{1}h_{x}(\xi^{\ast})n(T)+\alpha h_{1}m(T)E[n(T)]=0\}},
	\end{align*}
	where $h_{1}\geq0$ and  $h_{2}\in\mathbb{R}$ satisfy  $h_1^2+h_2^2=1$, $b\in L^{\infty}(\mathcal{F}_{T})$ satisfies $X\leq b\leq Y$, and  $m(\cdot)$ and $n(\cdot)$ are given by the following adjoint equations:
	\begin{equation}
	\label{adjoint}\left\{
	\begin{aligned} dn(t)&=f_{y}(t,y_{1}^{\ast}(t),z_{1}^{\ast}(t))n(t)dt+f_{z}(t,y_{1}^{\ast }(t),z_{1}^{\ast}(t))n(t)dW_{t},\\ dm(t)&=g_{y}(t,y_{2}^{\ast}(t),z_{2}^{\ast}(t))m(t)dt+g_{z}(t,y_{2}^{\ast }(t),z_{2}^{\ast}(t))m(t)dW_{t},\\ n(0)&=1,\ \ m(0)=1. \end{aligned} \right.
	\end{equation}
	
\end{theorem}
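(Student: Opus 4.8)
The plan is to combine the terminal perturbation technique with Ekeland's variational principle, relying on BMO estimates to cope with the quadratic growth of $f$ and $g$. For $\xi\in U$ write $y_1^\xi(0)$ and $y_2^\xi(0)=\mathcal{E}_g[\xi]$ for the initial values obtained from \eqref{bsde object}, so that \eqref{g-object} reads $\inf\{y_1^\xi(0):\xi\in U,\ y_2^\xi(0)\le\pi_0\}$. Fix $\xi\in U$, put $\eta=\xi-\xi^\ast$, and note that by convexity of $U$ the perturbation $\xi^\ast+\rho\eta$ lies in $U$ for every $\rho\in[0,1]$. Differentiating the two BSDEs in \eqref{bsde object} formally in $\rho$ at $\rho=0$ leads to the linear variational equations
\[
\begin{aligned}
d\tilde y_2(t)&=-\big(g_y(t)\tilde y_2(t)+g_z(t)\tilde z_2(t)\big)dt+\tilde z_2(t)dW_t,\qquad \tilde y_2(T)=\eta,\\
d\tilde y_1(t)&=-\big(f_y(t)\tilde y_1(t)+f_z(t)\tilde z_1(t)\big)dt+\tilde z_1(t)dW_t,\qquad \tilde y_1(T)=h_x(\xi^\ast)\eta+\alpha\tilde y_2(0),
\end{aligned}
\]
where the coefficients are frozen along the optimal trajectory, e.g.\ $g_y(t)=g_y(t,y_2^\ast(t),z_2^\ast(t))$. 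Here the first difficulty arises: because $f$ and $g$ only grow quadratically, $f_z$ and $g_z$ grow linearly in $z_1^\ast,z_2^\ast$, so $\int f_z\,dW$ and $\int g_z\,dW$ are merely BMO martingales rather than bounded ones. I would therefore use Lemma~\ref{exist_lemma} and the accompanying BMO bounds to show that these linear BSDEs are well posed and, crucially, that the difference quotients $\rho^{-1}(y_i^{\xi^\ast+\rho\eta}(0)-y_i^{\xi^\ast}(0))$ converge to $\tilde y_i(0)$ as $\rho\downarrow0$. I expect this step -- a uniform-in-$\rho$ a priori estimate together with a stability estimate for quadratic BSDEs -- to be the technical heart of the proof; the energy inequality and the reverse H\"older inequality for stochastic exponentials of BMO martingales are the tools I would deploy.

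Second, I would convert these derivatives into explicit expressions through the adjoint equations \eqref{adjoint}. Applying It\^o's formula to $m(t)\tilde y_2(t)$ and to $n(t)\tilde y_1(t)$, the drift terms cancel by the construction of \eqref{adjoint}, so both products are local martingales, which the BMO bounds upgrade to true martingales. Using $m(0)=n(0)=1$ this yields the duality identities
\[
\tilde y_2(0)=E[m(T)\eta],\qquad \tilde y_1(0)=E\big[n(T)\big(h_x(\xi^\ast)\eta+\alpha\tilde y_2(0)\big)\big]=E\big[\big(h_x(\xi^\ast)n(T)+\alpha E[n(T)]m(T)\big)\eta\big].
\]
The constant $\alpha\tilde y_2(0)=\alpha E[m(T)\eta]$ factors through $E[n(T)]$, which is exactly what produces the term $\alpha h_1 m(T)E[n(T)]$ in the statement.

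Third, to deal with the constraint $y_2^\xi(0)\le\pi_0$ I would apply Ekeland's variational principle on $U$, equipped with the complete metric $d(\xi,\xi')=(E|\xi-\xi'|^2)^{1/2}$, to the penalised functional
\[
F_\varepsilon(\xi)=\Big(\big([y_1^\xi(0)-y_1^{\xi^\ast}(0)+\varepsilon]^+\big)^2+\big([y_2^\xi(0)-\pi_0]^+\big)^2\Big)^{1/2}.
\]
Optimality of $\xi^\ast$ forces $F_\varepsilon>0$ on $U$ while $F_\varepsilon(\xi^\ast)=\varepsilon$, so $\xi^\ast$ is an $\varepsilon$-minimiser and Ekeland provides $\xi^\varepsilon$ with $d(\xi^\varepsilon,\xi^\ast)\le\sqrt\varepsilon$ minimising $F_\varepsilon(\cdot)+\sqrt\varepsilon\,d(\cdot,\xi^\varepsilon)$. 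Carrying out the convex perturbation of the first step around $\xi^\varepsilon$ and letting $\rho\downarrow0$ gives, after the chain rule, a variational inequality with coefficients $h_1^\varepsilon=A_\varepsilon/\sqrt{A_\varepsilon^2+B_\varepsilon^2}\ge0$ and $h_2^\varepsilon=B_\varepsilon/\sqrt{A_\varepsilon^2+B_\varepsilon^2}\ge0$, where $A_\varepsilon,B_\varepsilon$ denote the two positive parts in $F_\varepsilon$ evaluated at $\xi^\varepsilon$; by construction $(h_1^\varepsilon)^2+(h_2^\varepsilon)^2=1$. Since the unit circle is compact and $\xi^\varepsilon\to\xi^\ast$, I would extract a subsequence along which $(h_1^\varepsilon,h_2^\varepsilon)\to(h_1,h_2)$ with $h_1\ge0$ and $h_1^2+h_2^2=1$, and use the stability of $m,n$ under $\xi^\varepsilon\to\xi^\ast$ to pass to the limit and obtain
\[
E\big[\big(h_2 m(T)+h_1 h_x(\xi^\ast)n(T)+\alpha h_1 m(T)E[n(T)]\big)(\xi-\xi^\ast)\big]\ge0\qquad\text{for all }\xi\in U.
\]

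Finally, denoting by $\Lambda(T)$ the random variable in parentheses, the last inequality states that $\xi^\ast$ minimises $\xi\mapsto E[\Lambda(T)\xi]$ over the order interval $[X,Y]$. As the constraint $X\le\xi\le Y$ acts pointwise, this minimisation decouples $\omega$ by $\omega$: one must take $\xi^\ast=X$ on $\{\Lambda(T)>0\}$, $\xi^\ast=Y$ on $\{\Lambda(T)<0\}$, and $\xi^\ast$ may be any $b\in[X,Y]$ on $\{\Lambda(T)=0\}$, which is precisely the asserted bang-bang representation. The genuine obstacle in this programme is the first step: the a priori BMO estimates and the convergence of the difference quotients for quadratic generators, which is exactly where the linear-growth arguments of Ji and Peng break down and the BMO machinery becomes indispensable.
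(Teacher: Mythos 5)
Your proposal follows essentially the same route as the paper's proof: a first-order terminal perturbation whose difference quotients are controlled by BMO/reverse-H\"older estimates (Lemma~\ref{lemma estimation}), Ekeland's variational principle applied to a penalized functional to generate the multipliers $(h_1,h_2)$ on the unit circle (Lemma~\ref{lemma-h1-h2}), the adjoint processes $m,n$ turning the variational inequality into $E[\Lambda(T)(\xi-\xi^{\ast})]\geq 0$, and a pointwise argument for the bang-bang form. The only deviations are cosmetic: the paper runs Ekeland with the $\|\cdot\|_{\infty}$ metric rather than $L^{2}$ and penalizes the constraint by $(\mathcal{E}_{g}[\xi]-\pi_{0})^{2}$ (treating the non-binding case $\mathcal{E}_{g}[\xi^{\ast}]<\pi_{0}$ separately) instead of a positive part, which is why its $h_{2}$ is merely real-valued where yours comes out nonnegative.
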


Before proving Theorem \ref{theorem}, we  develop some technical lemmas. For each $\rho\in [0, \,1]$ and $\xi\in U$, we introduce $\xi^{\rho}=\xi^{\ast}+\rho(\xi-\xi^{\ast})$.  Let $(y_{1}^{\rho}, z_{1}^{\rho}, y_{2}^{\rho}, z_{2}^{\rho})\in\mathcal{S}^{\infty}\times\text{\textrm{BMO}}\times\mathcal{S}^{\infty}\times\text{\textrm{BMO}}$
be the unique solution to (\ref{bsde object})  with $\xi$ replaced by $\xi^{\rho}$. It is easy to verify that there exists a unique solution $  (\hat{y}_{1}, \hat{z}_{1}, \hat{y}_{2}, \hat{z}_{2})\in\mathcal{S}^{\infty}\times\text{\textrm{BMO}}\times\mathcal{S}^{\infty}\times\text{\textrm{BMO}}$ to the following variational equation:
\begin{equation}
\label{bsde variation}\left\{
\begin{aligned} -d\hat{y}_{1}(t)&=[f_{y}(t,y_{1}^{\ast}(t),z_{1}^{\ast}(t))\hat{y} _{1}(t)+f_{z}(t,y_{1}^{\ast}(t),z_{1}^{\ast}(t))\hat{z}_{1}(t)]dt-\hat{z}_{1}(t)dW_{t},\\ \hat{y}_{1}(T)&=h_{x}(\xi^{\ast})(\xi-\xi^{\ast})+\alpha\hat{y}_{2}(0),\\ -d\hat{y}_{2}(t)&=[g_{y}(t,y_{2}^{\ast}(t),z_{2}^{\ast}(t))\hat{y} _{2}(t)+g_{z}(t,y_{2}^{\ast}(t),z_{2}^{\ast}(t))\hat{z}_{2}(t)]dt-\hat{z}_{2}(t)dW_{t},\\ \hat{y}_{2}(T)&=\xi-\xi^{\ast}. \end{aligned} \right.
\end{equation}
For each $i=1,2$, we denote
\begin{equation*}
\tilde{y}_{i}^{\rho}(t)  =\frac{y_{i}^{\rho}(t)-y_{i}^{*}(t)}{\rho}-\hat
{y}_{i}(t),\quad \mbox{and}\quad  \tilde{z}_{i}^{\rho}(t)  =\frac{z_{i}^{\rho}(t)-z_{i}^{*}(t)}{\rho}-\hat
{z}_{i}(t).
\end{equation*}
\begin{lemma}
	\label{lemma estimation}
	Suppose that $f$ and $g$ satisfy Assumptions \ref{ass-qua} and \ref{ass-ini} and $h$ obeys Assumption \ref{ass-h}. Then \begin{align*}
	\lim_{\rho\rightarrow0} \|\tilde{y}_{1}^{\rho}\|_{\mathcal{S}^{\infty}}=0,~\lim_{\rho\rightarrow0} \|\tilde{y}_{2}^{\rho}\|_{\mathcal{S}^{\infty}}=0, ~\lim_{\rho\rightarrow0} \|\tilde{z}_{1}^{\rho}\|_{\text{\textrm{BMO}}}=0, \text{ and }\lim_{\rho\rightarrow0} \|\tilde{z}_{2}^{\rho}\|_{\text{\textrm{BMO}}} =0.
	\end{align*}
	
\end{lemma}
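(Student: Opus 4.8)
The plan is to realize each remainder $\tilde y_i^\rho$ as the solution of a \emph{linear} BSDE and to control it by BMO techniques; I would treat $i=2$ first, since $\hat y_2(0)$ enters the terminal condition of the $y_1$-equation. Writing $\Delta y_2^\rho=y_2^\rho-y_2^\ast$ and $\Delta z_2^\rho=z_2^\rho-z_2^\ast$ and applying the fundamental theorem of calculus to the $g$-increment gives
\[ g(t,y_2^\rho,z_2^\rho)-g(t,y_2^\ast,z_2^\ast)=a^\rho(t)\Delta y_2^\rho(t)+b^\rho(t)\cdot\Delta z_2^\rho(t), \]
where $a^\rho=\int_0^1 g_y(t,y_2^\ast+\lambda\Delta y_2^\rho,z_2^\ast+\lambda\Delta z_2^\rho)\,d\lambda$ and $b^\rho$ is the analogous average of $g_z$. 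By Assumption~\ref{ass-qua}, $|a^\rho|\le C$ and $|b^\rho|\le C(1+|z_2^\ast|+|z_2^\rho|)$; since $U$ is convex we have $\xi^\rho\in U$, so Lemma~\ref{exist_lemma} bounds $\|z_2^\ast\|_{\mathrm{BMO}}$ and $\|z_2^\rho\|_{\mathrm{BMO}}$ uniformly in $\rho$, and hence $\|b^\rho\|_{\mathrm{BMO}}$ as well. Thus $\Delta y_2^\rho/\rho$ and $\Delta z_2^\rho/\rho$ solve a linear BSDE with bounded drift coefficient $a^\rho$, $\mathrm{BMO}$ coefficient $b^\rho$ and terminal value $\xi-\xi^\ast\in L^\infty$, and the standard a priori estimate for such equations yields $\|\Delta y_2^\rho\|_{\mathcal S^\infty}+\|\Delta z_2^\rho\|_{\mathrm{BMO}}\le K\rho$ with $K$ independent of $\rho$; in particular $\Delta y_2^\rho\to0$ in $\mathcal S^\infty$ and $\Delta z_2^\rho\to0$ in $\mathrm{BMO}$.

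Subtracting the variational equation~\eqref{bsde variation}, the remainder $\tilde y_2^\rho$ solves
\[ -d\tilde y_2^\rho(t)=\big[a^\rho(t)\tilde y_2^\rho(t)+b^\rho(t)\cdot\tilde z_2^\rho(t)+\phi^\rho(t)\big]\,dt-\tilde z_2^\rho(t)\,dW_t,\qquad \tilde y_2^\rho(T)=0, \]
with source $\phi^\rho=(a^\rho-g_y(t,y_2^\ast,z_2^\ast))\hat y_2+(b^\rho-g_z(t,y_2^\ast,z_2^\ast))\hat z_2$. I would introduce the measure $\widetilde P$ with density $\exp(\int_0^T b^\rho\,dW_s-\tfrac12\int_0^T|b^\rho|^2\,ds)$, which is a genuine probability measure because $b^\rho\in\mathrm{BMO}$; then $\widetilde W=W-\int b^\rho\,ds$ is a $\widetilde P$-Brownian motion, the $b^\rho\cdot\tilde z_2^\rho$ term is absorbed, and $\mathrm{BMO}(P)$- and $\mathrm{BMO}(\widetilde P)$-norms are equivalent (Kazamaki). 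This gives the representation $\tilde y_2^\rho(\tau)=E^{\widetilde P}\big[\int_\tau^T e^{\int_\tau^s a^\rho\,du}\phi^\rho(s)\,ds\mid\mathcal F_\tau\big]$, so $\|\tilde y_2^\rho\|_{\mathcal S^\infty}\le e^{CT}\sup_\tau\big\|E^{\widetilde P}[\int_\tau^T|\phi^\rho|\,ds\mid\mathcal F_\tau]\big\|_\infty$. By the Lipschitz bounds on $g_y,g_z$ one has $|\phi^\rho|\le \tfrac C2(|\Delta y_2^\rho|+|\Delta z_2^\rho|)(|\hat y_2|+|\hat z_2|)$, and each of the four resulting products is estimated by the conditional Cauchy--Schwarz inequality and the energy inequality for $\mathrm{BMO}$ martingales; the only delicate one is the product $|\Delta z_2^\rho||\hat z_2|$ of two $\mathrm{BMO}$ processes, bounded by $\|\Delta z_2^\rho\|_{\mathrm{BMO}}\|\hat z_2\|_{\mathrm{BMO}}$. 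Since $\|\Delta y_2^\rho\|_{\mathcal S^\infty}$ and $\|\Delta z_2^\rho\|_{\mathrm{BMO}}$ are of order $\rho$, this produces a bound of order $\rho$, hence $\|\tilde y_2^\rho\|_{\mathcal S^\infty}\to0$; applying Itô's formula to $(\tilde y_2^\rho)^2$ and the energy inequality upgrades this to $\|\tilde z_2^\rho\|_{\mathrm{BMO}}\to0$.

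The case $i=1$ is handled identically with $f$ in place of $g$; the only new feature is the terminal condition
\[ \tilde y_1^\rho(T)=\Big[\tfrac{h(\xi^\rho)-h(\xi^\ast)}{\rho}-h_x(\xi^\ast)(\xi-\xi^\ast)\Big]+\alpha\,\tilde y_2^\rho(0). \]
The bracketed term tends to $0$ in $L^\infty$ by the Lipschitz continuity of $h_x$ (Assumption~\ref{ass-h}), while $|\tilde y_2^\rho(0)|\le\|\tilde y_2^\rho\|_{\mathcal S^\infty}\to0$ by the previous step, so $\|\tilde y_1^\rho(T)\|_\infty\to0$. The same change-of-measure representation, now carrying the additional but vanishing terminal contribution $E^{\widetilde P}[e^{\int_\tau^T a_1^\rho}\tilde y_1^\rho(T)\mid\mathcal F_\tau]$, then gives $\|\tilde y_1^\rho\|_{\mathcal S^\infty}\to0$ and, via Itô's formula, $\|\tilde z_1^\rho\|_{\mathrm{BMO}}\to0$.

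I expect the main obstacle throughout to be the quadratic growth of $f$ and $g$ in $z$. It forces the linearized coefficient $b^\rho$ (and its $f$-analogue) to be only $\mathrm{BMO}$ rather than bounded, and it makes the source term contain the product $|\Delta z_2^\rho||\hat z_2|$ of two $\mathrm{BMO}$ processes. Controlling these quantities is precisely where the elementary $L^2$ arguments of the linear-growth setting break down, and where one must instead rely on the uniform a priori $\mathrm{BMO}$ bounds from Lemma~\ref{exist_lemma}, the stability of the $\mathrm{BMO}$ property under the Girsanov change of measure, and the energy inequality for $\mathrm{BMO}$ martingales to estimate products of such processes.
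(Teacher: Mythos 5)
Your proposal is correct and follows essentially the same route as the paper: linearize via the fundamental theorem of calculus, remove the BMO Girsanov kernel $b^\rho$ by a change of measure, and control the source term $\phi^\rho$ (the paper's $C^\rho$) using Kazamaki's equivalence of $\mathrm{BMO}(P)$- and $\mathrm{BMO}(Q^\rho)$-norms together with conditional Cauchy--Schwarz for products of BMO processes. The only structural difference is that you first prove the separate a priori bound $\|\Delta y_2^\rho\|_{\mathcal S^\infty}+\|\Delta z_2^\rho\|_{\mathrm{BMO}}\le K\rho$ and substitute it into $\phi^\rho$, whereas the paper writes $\Delta y_2^\rho=\rho(\hat y_2+\tilde y_2^\rho)$, $\Delta z_2^\rho=\rho(\hat z_2+\tilde z_2^\rho)$ and closes the estimate by absorbing the $\tilde y_2^\rho,\tilde z_2^\rho$ terms into the left-hand side for $\rho$ small; both devices are valid and yield the same conclusion.
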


\begin{proof}
	From \eqref{bsde object} and \eqref{bsde variation}, we get
	\[
	\left\{
	\begin{aligned}
	-d\tilde{y}_{2}^{\rho}(t) = & \rho^{-1}[f(t,y_{2}^{\rho}(t),z_{2}^{\rho }(t))-f(t,y_{2}^{\ast}(t),z_{2}^{\ast}(t))\\
	& -\rho f_{y}(t,y_{2}^{\ast}(t),z_{2}^{\ast}(t))\hat{y}_{2}(t)-\rho f_{z}(t,y_{2}^{\ast}(t),z_{2}^{\ast}(t))\hat{z}_{2}(t)]dt-\tilde{z} _{2}^{\rho}(t)dW_{t},\\
	\tilde{y}_{2}^{\rho}(T) = &0.
	\end{aligned}
	\right.
	\]
	Set
	\begin{align*}
	A^{\rho}(t) &  =\int_{0}^{1}f_{y}(t,y_{2}^{\ast}(t)+\lambda\rho(\hat{y}_{2}(t)+\tilde{y}_{2}^{\rho}(t)),z_{2}^{\ast}(t)+\lambda\rho(\hat{z}_{2}(t)+\tilde{z}_{2}^{\rho}(t)))d\lambda,\\
	B^{\rho}(t) &  =\int_{0}^{1}f_{z}(t,y_{2}^{\ast}(t)+\lambda\rho(\hat{y}_{2}(t)+\tilde{y}_{2}^{\rho}(t)),z_{2}^{\ast}(t)+\lambda\rho(\hat{z}_{2}(t)+\tilde{z}_{2}^{\rho}(t)))d\lambda,\\
	C^{\rho}(t) &  =[A^{\rho}(t)-f_{y}(t,y_{2}^{\ast}(t),z_{2}^{\ast}(t))]\hat{y}_{2}(t)+[B^{\rho}(t)-f_{z}(t,y_{2}^{\ast}(t),z_{2}^{\ast}(t))]\hat{z}_{2}(t),
	\end{align*}
	we get that
	\[
	\left\{
	\begin{aligned}
	-d\tilde{y}_{2}^{\rho}(t)  = & [A^{\rho}(t)\tilde{y}_{2}^{\rho}(t)+B^{\rho}(t)\tilde{z}_{2}^{\rho}(t)+C^{\rho}(t)]dt-\tilde{z}_{2}^{\rho}(t)dW_{t},\\
	\tilde{y}_{2}^{\rho}(T) = & 0,
	\end{aligned}
	\right.
	\]
	and accordingly,
	\[
	e^{\int_{0}^{t}A^{\rho}(s)ds}\tilde{y}_{2}^{\rho}(t)=\int_{t}^{T}e^{\int_{0}^{s}A^{\rho}(r)dr}C^{\rho}(s)ds-\int_{t}^{T}e^{\int_{0}^{s}A^{\rho}(r)dr}\tilde{z}_{2}^{\rho}(s)dW_{s}^{\rho},
	\]
	where $W_{t}^{\rho}=W_{t}-\int_{0}^{t}B^{\rho}(s)ds$ is a Brownian motion under the equivalent measure $Q^{\rho}$ defined by
	\[
	\frac{dQ^{\rho}}{dP}=e^{-\frac{1}{2}\int_{0}^{T}(B^{\rho}(t))^{2}dt+\int_{0}^{T}B^{\rho}(t)dW_{t}}.
	\]
	As a result, we get
	\[
	e^{2\int_{0}^{t}A^{\rho}(s)ds}(\tilde{y}_{2}^{\rho}(t))^{2}+E^{Q^{\rho}}\left[  \int_{t}^{T}e^{2\int_{0}^{s}A^{\rho}(r)dr}(\tilde{z}_{2}^{\rho}(s))^{2}ds\bigg|\mathcal{F}_{t}\right]  \leq E^{Q^{\rho}}\left[  \left(\int_{t}^{T}e^{\int_{0}^{s}A^{\rho}(r)dr}C^{\rho}(s)ds\right)  ^{2}\bigg|\mathcal{F}_{t}\right],
	\]
	where $E^{Q^{\rho}}[\cdot]$ is the expectation operator under the measure $Q^{\rho}$.

	It follows from Assumption \ref{ass-qua} that
	\begin{align*}
	&  E^{Q^{\rho}}\left[  \left(  \int_{t}^{T}e^{\int_{0}^{s}A^{\rho}(r)dr}C^{\rho}(s)ds\right)  ^{2}\bigg|\mathcal{F}_{t}\right]  \\
	&  \leq e^{CT}E^{Q^{\rho}}\left[  \left(  \int_{t}^{T}|[A^{\rho}(s)-f_{y}(s,y_{2}^{\ast}(s),z_{2}^{\ast}(s))]\hat{y}_{2}(s)+[B^{\rho}(s)-f_{z}(s,y_{2}^{\ast}(s),z_{2}^{\ast}(s))]\hat{z}_{2}(s)|ds\right)^{2}\bigg|\mathcal{F}_{t}\right]  \\
	&  \leq2e^{CT}E^{Q^{\rho}}\left[  \int_{t}^{T}|A^{\rho}(s)-f_{y}(s,y_{2}^{\ast}(s),z_{2}^{\ast}(s))|^{2}ds\int_{t}^{T}|\hat{y}_{2}(s)|^{2}ds\bigg|\mathcal{F}_{t}\right]  \\
	&  \quad+2e^{CT}E^{Q^{\rho}}\left[  \int_{t}^{T}|B^{\rho}(s)-f_{z}(s,y_{2}^{\ast}(s),z_{2}^{\ast}(s))|^{2}ds\int_{t}^{T}|\hat{z}_{2}(s)|^{2}ds\bigg|\mathcal{F}_{t}\right]  \\
	&  \leq2e^{CT}T\Vert\hat{y}_{2}\Vert_{\mathcal{S}^{\infty}}^{2}\frac{2C^{2}\rho^{2}}{3}E^{Q^{\rho}}\left[  \int_{t}^{T}\left(  |\hat{y}_{2}(s)+\tilde{y}_{2}^{\rho}(s)|^{2}+|\hat{z}_{2}(s)+\tilde{z}_{2}^{\rho}(s)|^{2}\right)  ds\bigg|\mathcal{F}_{t}\right]  \\
	&  \quad+2e^{CT}\frac{2C^{2}\rho^{2}}{3}E^{Q^{\rho}}\left[  \int_{t}^{T}\left(  |\hat{y}_{2}(s)+\tilde{y}_{2}^{\rho}(s)|^{2}+|\hat{z}_{2}(s)+\tilde{z}_{2}^{\rho}(s)|^{2}\right)  ds\int_{t}^{T}|\hat{z}_{2}(s)|^{2}ds\bigg|\mathcal{F}_{t}\right]  \\
	&  \leq2e^{CT}T\Vert\hat{y}_{2}\Vert_{\mathcal{S}^{\infty}}^{2}\frac{4C^{2}\rho^{2}}{3}\left(  T\left(  \Vert\hat{y}_{2}\Vert_{\mathcal{S}^{\infty}}^{2}+\Vert\tilde{y}_{2}^{\rho}\Vert_{\mathcal{S}^{\infty}}^{2}\right)  +\Vert\hat{z}_{2}\Vert_{\text{\textrm{BMO}}(Q^{\rho})}^{2}+\Vert\tilde{z}_{2}^{\rho}\Vert_{\text{\textrm{BMO}}(Q^{\rho})}^{2}\right)  \\
	&  \quad+2e^{CT}\frac{2C^{2}\rho^{2}}{3}\left(  2T\left(  \Vert\hat{y}_{2}\Vert_{\mathcal{S}^{\infty}}^{2}+\Vert\tilde{y}_{2}^{\rho}\Vert
	_{\mathcal{S}^{\infty}}^{2}\right)  \Vert\hat{z}_{2}\Vert_{\text{\textrm{BMO}}(Q^{\rho})}^{2}+E^{Q^{\rho}}\left[  \int_{t}^{T}|\hat{z}_{2}(s)+\tilde{z}_{2}^{\rho}(s)|^{2}ds\int_{t}^{T}|\hat{z}_{2}(s)|^{2}ds\bigg|\mathcal{F}_{t}\right]  \right), 
	\end{align*}
	where $\Vert\cdot\Vert_{\text{\textrm{BMO}}(Q^{\rho})}$ is the BMO-norm with respect to probability $Q^{\rho}$.
	By  H\"{o}lder's inequality,
	\begin{align*}
	&  E^{Q^{\rho}}\left[  \int_{t}^{T}|\hat{z}_{2}(s)+\tilde{z}_{2}^{\rho}(s)|^{2}ds\int_{t}^{T}|\hat{z}_{2}(s)|^{2}ds\bigg|\mathcal{F}_{t}\right]  \\
	&  \leq E^{Q^{\rho}}\left[  \left(  \int_{t}^{T}|\hat{z}_{2}(s)+\tilde{z}_{2}^{\rho}(s)|^{2}ds\right)  ^{2}\bigg|\mathcal{F}_{t}\right]  ^{\frac{1}{2}}E^{Q^{\rho}}\left[  \left( \int_{t}^{T}|\hat{z}_{2}(s)|^{2}ds\right)^{2}\bigg|\mathcal{F}_{t}\right]  ^{\frac{1}{2}}.
	\end{align*}
	Moreover, it follows from \cite{kazamaki} that there exists a constant $K$, which is  independent of $\rho$, $\xi$, and $\xi^{\ast}$, such that
	\[
	E^{Q^{\rho}}\left[  \left(  \int_{t}^{T}|\hat{z}_{2}(s)+\tilde{z}_{2}^{\rho}(s)|^{2}ds\right)  ^{2}\bigg|\mathcal{F}_{t}\right]  ^{\frac{1}{2}}\leq K\Vert\hat{z}_{2}+\tilde{z}_{2}^{\rho}\Vert_{\text{\textrm{BMO}}(Q^{\rho})}^{2}\leq2K\big(\Vert\hat{z}_{2}\Vert_{\text{\textrm{BMO}}(Q^{\rho})}^{2}+\Vert\tilde{z}_{2}^{\rho}\Vert_{\text{\textrm{BMO}}(Q^{\rho})}^{2}\big)
	\]
	and
	\[
	E^{Q^{\rho}}\left[  \left(  \int_{t}^{T}|\hat{z}_{2}(s)|^{2}ds\right)^{2}\bigg|\mathcal{F}_{t}\right]  ^{\frac{1}{2}}\leq K\Vert\hat{z}_{2}\Vert_{\text{\textrm{BMO}}(Q^{\rho})}^{2}.
	\]
	Thus, we obtain
	\begin{align*}
	&  e^{2\int_{0}^{t}A^{\rho}(s)ds}(\tilde{y}_{2}^{\rho}(t))^{2}+E^{Q^{\rho}}\left[  \int_{t}^{T}e^{2\int_{0}^{s}A^{\rho}(r)dr}(\tilde{z}_{2}^{\rho}(s))^{2}ds\bigg|\mathcal{F}_{t}\right]  \\
	&  \leq2e^{CT}T\Vert\hat{y}_{2}\Vert_{\mathcal{S}^{\infty}}^{2}\frac{4C^{2}\rho^{2}}{3}\left[  T\left(  \Vert\hat{y}_{2}\Vert_{\mathcal{S}^{\infty}}^{2}+\Vert\tilde{y}_{2}^{\rho}\Vert_{\mathcal{S}^{\infty}}^{2}\right)  +\Vert\hat{z}_{2}\Vert_{\text{\textrm{BMO}}(Q^{\rho})}^{2}+\Vert\tilde{z}_{2}^{\rho}\Vert_{\text{\textrm{BMO}}(Q^{\rho})}^{2}\right]  \\
	&  \quad+2e^{CT}\frac{4C^{2}\rho^{2}}{3}\Vert\hat{z}_{2}\Vert_{\text{\textrm{BMO}}(Q^{\rho})}^{2}\left[  T\left(  \Vert\hat{y}_{2}\Vert_{\mathcal{S}^{\infty}}^{2}+\Vert\tilde{y}_{2}^{\rho}\Vert_{\mathcal{S}^{\infty}}^{2}\right)  +K^{2}(\Vert\hat{z}_{2}\Vert_{\text{\textrm{BMO}}(Q^{\rho})}^{2}+\Vert\tilde{z}_{2}^{\rho}\Vert_{\text{\textrm{BMO}}(Q^{\rho})}^{2})\right],
	\end{align*}
	which further implies
	\begin{align*}
	&  \Vert\tilde{y}_{2}^{\rho}\Vert_{\mathcal{S}^{\infty}}^{2}+\Vert\tilde{z}_{2}^{\rho}\Vert_{\text{\textrm{BMO}}(Q^{\rho})}^{2}\\
	&  \leq4e^{3CT}T\Vert\hat{y}_{2}\Vert_{\mathcal{S}^{\infty}}^{2}\frac{4C^{2}\rho^{2}}{3}\left[  T\left(  \Vert\hat{y}_{2}\Vert_{\mathcal{S}^{\infty}}^{2}+\Vert\tilde{y}_{2}^{\rho}\Vert_{\mathcal{S}^{\infty}}^{2}\right)  +\Vert\hat{z}_{2}\Vert_{\text{\textrm{BMO}}(Q^{\rho})}^{2}+\Vert\tilde{z}_{2}^{\rho}\Vert_{\text{\textrm{BMO}}(Q^{\rho})}^{2}\right]  \\
	&  \quad+4e^{3CT}\frac{4C^{2}\rho^{2}}{3}\Vert\hat{z}_{2}\Vert_{\text{\textrm{BMO}}(Q^{\rho})}^{2}\left[  T\left(  \Vert\hat{y}_{2}\Vert_{\mathcal{S}^{\infty}}^{2}+\Vert\tilde{y}_{2}^{\rho}\Vert_{\mathcal{S}^{\infty}}^{2}\right)  +K^{2}(\Vert\hat{z}_{2}\Vert_{\text{\textrm{BMO}}(Q^{\rho})}^{2}+\Vert\tilde{z}_{2}^{\rho}\Vert_{\text{\textrm{BMO}}(Q^{\rho})}^{2})\right]  .
	\end{align*}
	Noting  that
	\[
	|B^{\rho}(t)|\leq C(1+|z_{2}^{\ast}(t)|+|z_{2}^{\rho}(t)|)
	\]
	and that there exists a constant $L\geq0$ which only depends on $\xi$, $\xi^{\ast}$ and $C$ such that $\Vert z_{2}^{\ast}\Vert_{\text{\textrm{BMO}}}\leq L$ and $\Vert z_{2}^{\rho}\Vert_{\text{\textrm{BMO}}}\leq L$, it follows from \cite{kazamaki} that there exist two constants $K_{1}>0$ and $K_{2}>0$ which only depend on $L$ and $C$ such that for any $z\in\text{\textrm{BMO}}$, it holds that
	\[
	K_{1}\Vert z\Vert_{\text{\textrm{BMO}}}^{2}\leq\Vert z\Vert_{\text{\textrm{BMO}}(Q^{\rho})}^{2}\leq K_{2}\Vert z\Vert_{\text{\textrm{BMO}}}^{2}.
	\]
	Thus, we have
	\begin{align*}
	&  \Vert\tilde{y}_{2}^{\rho}\Vert_{\mathcal{S}^{\infty}}^{2}+K_{1}\Vert\tilde{z}_{2}^{\rho}\Vert_{\text{\textrm{BMO}}}^{2}\\
	&  \leq4e^{3CT}T\Vert\hat{y}_{2}\Vert_{\mathcal{S}^{\infty}}^{2}\frac{4C^{2}\rho^{2}}{3}\left[  T\left(  \Vert\hat{y}_{2}\Vert_{\mathcal{S}^{\infty}}^{2}+\Vert\tilde{y}_{2}^{\rho}\Vert_{\mathcal{S}^{\infty}}^{2}\right)  +K_{2}\Vert\hat{z}_{2}\Vert_{\text{\textrm{BMO}}}^{2}+K_{2}\Vert\tilde{z}_{2}^{\rho}\Vert_{\text{\textrm{BMO}}}^{2}\right]  \\
	&  \quad+4e^{3CT}\frac{4C^{2}\rho^{2}}{3}K_{2}\Vert\hat{z}_{2}\Vert_{\text{\textrm{BMO}}}^{2}\left[  T\left(  \Vert\hat{y}_{2}\Vert_{\mathcal{S}^{\infty}}^{2}+\Vert\tilde{y}_{2}^{\rho}\Vert_{\mathcal{S}^{\infty}}^{2}\right)  +K_{2}K^{2}(\Vert\hat{z}_{2}\Vert_{\text{\textrm{BMO}}}^{2}+\Vert\tilde{z}_{2}^{\rho}\Vert_{\text{\textrm{BMO}}}^{2})\right]  .
	\end{align*}
	By letting $\rho$ be small enough such that
	\[
	\frac{16C^{2}\rho^{2}T^{2}}{3}e^{3CT}\Vert\hat{y}_{2}\Vert_{\mathcal{S}^{\infty}}^{2}\leq\frac{1}{4},\quad \frac{16C^{2}\rho^{2}K_{2}T}{3}e^{3CT}\Vert\hat{y}_{2}\Vert_{\mathcal{S}^{\infty}}^{2}\leq\frac{K_{1}}{4}
	\]
	and
	\[
	\frac{16C^{2}\rho^{2}K_{2}T}{3}e^{3CT}\Vert\hat{z}_{2}\Vert_{\text{\textrm{BMO}}}^{2}\leq\frac{1}{4},\quad \frac{16C^{2}\rho^{2}K_{2}^{2}K^{2}}{3}e^{3CT}\Vert\hat{z}_{2}\Vert_{\text{\textrm{BMO}}}^{2}\leq\frac{1}{4},
	\]
	we have
	\begin{align*}
	\frac{1}{2}\Vert\tilde{y}_{2}^{\rho}\Vert_{\mathcal{S}^{\infty}}^{2}+\frac{K_{1}}{2}\Vert\tilde{z}_{2}^{\rho}\Vert_{\text{\textrm{BMO}}}^{2} &
	\leq4e^{3CT}T\Vert\hat{y}_{2}\Vert_{\mathcal{S}^{\infty}}^{2}\frac{4C^{2}\rho^{2}}{3}\left(  T\Vert\hat{y}_{2}\Vert_{\mathcal{S}^{\infty}}^{2}+K_{2}\Vert\hat{z}_{2}\Vert_{\text{\textrm{BMO}}}^{2}\right)  \\
	&  \quad+4e^{3CT}\frac{4C^{2}\rho^{2}}{3}K_{2}\Vert\hat{z}_{2}\Vert_{\text{\textrm{BMO}}}^{2}\left(  T\Vert\hat{y}_{2}\Vert_{\mathcal{S}^{\infty}}^{2}+K_{2}K^{2}\Vert\hat{z}_{2}\Vert_{\text{\textrm{BMO}}}^{2}\right)  .
	\end{align*}
	Therefore,  
	\[
	\lim_{\rho\rightarrow0}\Vert\tilde{y}_{2}^{\rho}\Vert_{\mathcal{S}^{\infty}}=0,\text{ and }~\lim_{\rho\rightarrow0}\Vert\tilde{z}_{2}^{\rho}\Vert_{\text{\textrm{BMO}}}=0.
	\]
	We can similarly show that $
	\lim_{\rho\rightarrow0}\Vert\tilde{y}_{1}^{\rho}\Vert_{\mathcal{S}^{\infty}}=0$ and $\lim_{\rho\rightarrow0}\Vert\tilde{z}_{1}^{\rho}\Vert_{\text{\textrm{BMO}}}=0$.
\end{proof}

\begin{lemma} \label{lemma-h1-h2}
	Suppose that $f$ and $g$ satisfy Assumptions~\ref{ass-qua} and \ref{ass-ini} and $h$ obeys Assumption \ref{ass-h}. Then, there exist $h_{1}\in\mathbb{R}$, $h_{2}\in\mathbb{R}$ with $h_{1}\geq0$ and $|h_{1}|^{2}+|h_{2}|^{2}=1$, such that the following variational inequality holds
	\[
	h_{1}\hat{y}_{1}(0)+h_{2}\hat{y}_{2}(0)\geq0.
	\]
	
\end{lemma}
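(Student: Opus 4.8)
The plan is to translate the optimality of $\xi^{\ast}$ into a first-order (variational) inequality and then invoke a finite-dimensional separation argument in $\mathbb{R}^2$. Write $J(\xi):=\mathcal{E}_f[h(\xi)+\alpha\mathcal{E}_g[\xi]]=y_1(0)$ for the objective and $G(\xi):=\mathcal{E}_g[\xi]=y_2(0)$ for the constraint functional. For $\xi\in U$ and the perturbation $\xi^{\rho}=\xi^{\ast}+\rho(\xi-\xi^{\ast})\in U$, the difference quotients satisfy $\rho^{-1}(y_i^{\rho}(0)-y_i^{\ast}(0))=\hat{y}_i(0)+\tilde{y}_i^{\rho}(0)$, so Lemma~\ref{lemma estimation} (which gives $\|\tilde{y}_i^{\rho}\|_{\mathcal{S}^{\infty}}\to0$, hence $\tilde{y}_i^{\rho}(0)\to0$) yields the one-sided directional derivatives
\[
\lim_{\rho\downarrow0}\frac{J(\xi^{\rho})-J(\xi^{\ast})}{\rho}=\hat{y}_1(0),\qquad \lim_{\rho\downarrow0}\frac{G(\xi^{\rho})-G(\xi^{\ast})}{\rho}=\hat{y}_2(0).
\]
First I would record that the map $\xi-\xi^{\ast}\mapsto(\hat{y}_1(0),\hat{y}_2(0))$ is \emph{linear}: the variational system~\eqref{bsde variation} is a pair of linear BSDEs whose terminal data $\xi-\xi^{\ast}$ and $h_x(\xi^{\ast})(\xi-\xi^{\ast})+\alpha\hat{y}_2(0)$ depend linearly on $\xi-\xi^{\ast}$ (note that $h_x(\xi^{\ast})$ is frozen at the optimizer). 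Since $U$ is convex, the \emph{achievable set}
\[
\mathcal{A}:=\{(\hat{y}_1(0),\hat{y}_2(0)):\xi\in U\}\subset\mathbb{R}^2
\]
is therefore convex, and taking $\xi=\xi^{\ast}$ shows $(0,0)\in\mathcal{A}$.

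Next I would separate two cases according to whether the constraint is active at $\xi^{\ast}$. If $G(\xi^{\ast})=\mathcal{E}_g[\xi^{\ast}]<\pi_0$, then for every $\xi\in U$ and all sufficiently small $\rho>0$ one still has $G(\xi^{\rho})<\pi_0$, so $\xi^{\rho}$ is feasible; optimality of $\xi^{\ast}$ gives $J(\xi^{\rho})\ge J(\xi^{\ast})$, and dividing by $\rho$ and letting $\rho\downarrow0$ yields $\hat{y}_1(0)\ge0$ for all $\xi\in U$, so the conclusion holds with $(h_1,h_2)=(1,0)$. If instead $G(\xi^{\ast})=\pi_0$ (the active case), I claim that $\mathcal{A}$ cannot meet the open third quadrant $\mathcal{O}:=\{(a,b):a<0,\ b<0\}$. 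Indeed, if some $\xi\in U$ produced $\hat{y}_2(0)<0$, then $G(\xi^{\rho})=\pi_0+\rho\hat{y}_2(0)+o(\rho)<\pi_0$ for small $\rho>0$, so $\xi^{\rho}$ is feasible and optimality forces $J(\xi^{\rho})\ge J(\xi^{\ast})$, whence $\hat{y}_1(0)\ge0$; thus $\hat{y}_1(0)<0$ and $\hat{y}_2(0)<0$ cannot occur simultaneously, i.e.\ $\mathcal{A}\cap\mathcal{O}=\emptyset$.

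Since $\mathcal{A}$ is convex and $\mathcal{O}$ is a nonempty, disjoint, open convex set, the separating hyperplane theorem in $\mathbb{R}^2$ provides a nonzero vector $(h_1,h_2)$ with (after fixing the orientation so that $\mathcal{A}$ lies on the larger side)
\[
h_1 a+h_2 b\ \ge\ h_1 a'+h_2 b'\qquad\text{for all }(a,b)\in\mathcal{A},\ (a',b')\in\mathcal{O}.
\]
Because $a'$ and $b'$ may each tend to $-\infty$ inside $\mathcal{O}$, finiteness of the right-hand side forces $h_1\ge0$ and $h_2\ge0$; letting $(a',b')\to(0,0)$ along $\mathcal{O}$ gives $\sup_{\mathcal{O}}(h_1 a'+h_2 b')=0$, so $h_1 a+h_2 b\ge0$ on $\mathcal{A}$. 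This is exactly $h_1\hat{y}_1(0)+h_2\hat{y}_2(0)\ge0$ for every $\xi\in U$, with $h_1\ge0$. After normalizing so that $h_1^2+h_2^2=1$ (possible since $(h_1,h_2)\ne0$), the assertion follows; in fact one obtains the sharper sign information $h_2\ge0$, consistent with a nonnegative Lagrange multiplier for the inequality constraint.

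The step I expect to be most delicate is the passage from optimality to the first-order exclusion $\mathcal{A}\cap\mathcal{O}=\emptyset$ in the active case: it is essential to use the \emph{strict} inequality $\hat{y}_2(0)<0$ so that the feasibility $G(\xi^{\rho})<\pi_0$ genuinely holds for small $\rho$—the borderline directions with $\hat{y}_2(0)=0$ would require second-order information and are deliberately bypassed by working with the open quadrant $\mathcal{O}$ and a weak separation. The other point requiring care is the convexity of $\mathcal{A}$, which hinges on the linearity of~\eqref{bsde variation} and on $h_x(\xi^{\ast})$ being evaluated at the fixed optimizer; here Lemma~\ref{lemma estimation} is precisely what guarantees that the difference quotients converge to the solutions of the variational system, making the directional derivatives (and hence $\mathcal{A}$) well defined.
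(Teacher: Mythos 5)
Your proof is correct, but it follows a genuinely different route from the paper's. The paper folds the constraint violation and the suboptimality gap into a single penalized functional $F_{\epsilon}(\xi)$, applies Ekeland's variational principle to obtain an approximate minimizer $\xi^{\epsilon}$ with $\|\xi^{\epsilon}-\xi^{\ast}\|_{\infty}\le\sqrt{\epsilon}$, reads off multipliers $(h_{1}^{\epsilon},h_{2}^{\epsilon})$ from the directional derivative of $F_{\epsilon}$ at $\xi^{\epsilon}$, and then passes to the limit $\epsilon\to0$ along a subsequence, which requires the extra stability step $\hat{y}_{i}^{\epsilon}\to\hat{y}_{i}$. You instead exploit the fact that the variational system~\eqref{bsde variation} is linear in $\xi-\xi^{\ast}$ (with $h_{x}$ frozen at $\xi^{\ast}$) and that $U$ is convex, so the attainable set $\mathcal{A}$ of derivative pairs $(\hat{y}_{1}(0),\hat{y}_{2}(0))$ is a convex subset of $\mathbb{R}^{2}$ containing the origin and, by the first-order optimality argument, disjoint from the open third quadrant; a finite-dimensional separation then hands you $(h_{1},h_{2})$ directly and uniformly over $\xi\in U$. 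Your argument is more elementary (no Ekeland, no $\epsilon$-limit, no stability of the $\epsilon$-perturbed variational equations) and even yields the sharper sign information $h_{2}\ge0$, which the paper's penalization of the constraint through $(\mathcal{E}_{g}[\xi]-\pi_{0})^{2}$ --- effectively an equality-constraint treatment --- cannot see. What the paper's route buys is robustness: the Ekeland machinery does not rely on convexity of the attainable derivative set and therefore carries over to equality constraints and to the non-convex settings of Ji--Peng and Ji--Zhou from which this proof is adapted. Both proofs use Lemma~\ref{lemma estimation} in exactly the same way, namely to identify the one-sided directional derivatives of $\xi\mapsto\mathcal{E}_{g}[\xi]$ and $\xi\mapsto\mathcal{E}_{f}[h(\xi)+\alpha\mathcal{E}_{g}[\xi]]$ along the segment $\xi^{\rho}$ with $\hat{y}_{2}(0)$ and $\hat{y}_{1}(0)$, and your handling of the delicate point (using strictness of $\hat{y}_{2}(0)<0$ to guarantee feasibility of $\xi^{\rho}$ in the active case) is sound.
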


\begin{proof}
	We will first show the proof  for the case where $\mathcal{E}_{g}[\xi^{\ast}]=\pi_{0}$.
	Define
	\[
	F_{\epsilon}(\xi)=\{(\mathcal{E}_{g}[\xi]-\pi_{0})^{2}+[(\mathcal{E}_{f}[h(\xi)+\alpha\mathcal{E}_{g}[\xi]]-\mathcal{E}_{f}[h(\xi^{\ast})+\alpha\mathcal{E}_{g}[\xi^{\ast}]])+\epsilon)^{+}]^{2}\}^{\frac{1}{2}}
	\]
	for positive constant $\epsilon$.

	It is easy to verify that $F_{\epsilon}(\cdot)$ is a continuous functional on $U$ with respect to $\|\cdot\|_{\infty}$ and satisfies
	\[
	\left\{
	\begin{aligned} &F_{\epsilon}(\xi)>0,\ \forall\xi\in U,\\ &F_{\epsilon}(\xi^{\ast})=\epsilon\leq\inf\limits_{\xi\in U}F_{\epsilon}(\xi)+\epsilon. \end{aligned} \right.
	\]
	Thus, by Ekeland's variational principle, there exists a $\xi^{\epsilon}\in U$ such that
	\[
	F_{\epsilon}(\xi^{\epsilon})\leq F_{\epsilon}(\xi^{\ast})=\epsilon,\ \ \|\xi^{\ast}-\xi^{\epsilon}\|_{\infty}\leq\sqrt{\epsilon},
	\]
	and
	\begin{equation}
	-\sqrt{\epsilon}\|\xi-\xi^{\epsilon}\|_{\infty}\leq F_{\epsilon}(\xi)-F_{\epsilon}(\xi^{\epsilon}),\ \ \ \forall\xi\in U \label{ekeland}
	\end{equation}
	which means $\xi^{\epsilon}$ is a minimizer (over $U$) of the cost functional $F_{\epsilon}(\xi)+\sqrt{\epsilon}\|\xi-\xi^{\epsilon}\|_{\infty}$.
	
	Set $\hat{\xi}=\xi-\xi^{\ast},\hat{\xi}^{\epsilon}=\xi-\xi^{\epsilon},\xi_{\rho}^{\epsilon}=\xi^{\epsilon}+\rho\hat{\xi}^{\epsilon}$, and consider the following variational equation:
	\[
	\left\{
	\begin{aligned}
	-d\hat{y}_{1}^{\epsilon}(t)&=[f_{y}(t,y_{1}^{\epsilon}(t),z_{1}^{\epsilon }(t))\hat{y}_{1}^{\epsilon}(t)+f_{z}(t,y_{1}^{\epsilon}(t),z_{1}^{\epsilon}(t))\hat{z}_{1}^{\epsilon}(t)]dt-\hat{z}_{1}^{\epsilon}(t)dW_{t},\\
	\hat{y}_{1}^{\epsilon}(T)&=h_{x}(\xi^{\epsilon})\hat{\xi}^{\epsilon}+\alpha\hat{y}_{2}^{\epsilon}(0),\\
	-d\hat{y}_{2}^{\epsilon}(t)&=[g_{y}(t,y_{2}^{\epsilon}(t),z_{2}^{\epsilon }(t))\hat{y}_{2}^{\epsilon}(t)+g_{z}(t,y_{2}^{\epsilon}(t),z_{2}^{\epsilon }(t))\hat{z}_{2}^{\epsilon}(t)]dt-\hat{z}_{2}^{\epsilon}(t)dW_{t},\\
	\hat{y}_{2}^{\epsilon}(T)&=\hat{\xi}^{\epsilon}.
	\end{aligned}
	\right.
	\]
	where $ (y_{1}^{\epsilon}, z_{1}^{\epsilon}, y_{2}^{\epsilon}, z_{2}^{\epsilon})\in\mathcal{S}^{\infty}\times\text{\textrm{BMO}}\times\mathcal{S}^{\infty}\times\text{\textrm{BMO}}$ is the unique solution to \eqref{bsde object} for $\xi=\xi^{\epsilon}$. We apply  Lemma \ref{lemma estimation} to obtain
	\begin{align*}
	& \lim_{\rho\rightarrow0}\left|\frac{1}{\rho}(\mathcal{E}_{g}[\xi_{\rho}^{\epsilon}]-\mathcal{E}_{g}[\xi^{\epsilon}])-\hat{y}_{2}^{\epsilon}(0)\right|=0,\\
	& \lim_{\rho\rightarrow0}\left|\frac{1}{\rho}\{\mathcal{E}_{f}[h(\xi_{\rho}^{\epsilon})+\alpha\mathcal{E}_{g}[\xi_{\rho}^{\epsilon}]]-\mathcal{E}_{f}[h(\xi^{\epsilon})+\alpha\mathcal{E}_{g}[\xi^{\epsilon}]]\}-\hat{y}_{1}^{\epsilon}(0)\right|=0.
	\end{align*}
	The above two equations imply
	\begin{eqnarray*}
		|\mathcal{E}_{g}[\xi_{\rho}^{\epsilon}]-\pi_{0}|^{2}-|\mathcal{E}_{g}[\xi^{\epsilon}]-\pi_{0}|^{2}=2\rho\hat{y}_{2}^{\epsilon}(0)(\mathcal{E}_{g}[\xi^{\epsilon}]-\pi_{0})+o(\rho),
	\end{eqnarray*}
	and
	\begin{eqnarray*}
		&& |\mathcal{E}_{f}[h(\xi_{\rho}^{\epsilon})+\alpha\mathcal{E}_{g}[\xi_{\rho}^{\epsilon}]]-\mathcal{E}_{f}[h(\xi^{\ast})+\alpha\mathcal{E}_{g}[\xi^{\ast}]]+\epsilon|^{2}-|\mathcal{E}_{f}[h(\xi^{\epsilon})+\alpha\mathcal{E}_{g}[\xi^{\epsilon}]]-\mathcal{E}_{f}[h(\xi^{\ast})+\alpha\mathcal{E}_{g}[\xi^{\ast}]]+\epsilon|^{2}\\
		&& \ \, =2\rho\hat{y}_{1}^{\epsilon}(0)(\mathcal{E}_{f}[h(\xi^{\epsilon})+\alpha\mathcal{E}_{g}[\xi^{\epsilon}]]-\mathcal{E}_{f}[h(\xi^{\ast})+\alpha\mathcal{E}_{g}[\xi^{\ast}]]+o(\rho),
	\end{eqnarray*}
	where $o(\rho)\to 0$ as $\rho\to 0$.
	
	We consider two exclusive cases in the subsequent analysis:
	\newline\emph{Case} 1: For an arbitrary small $\rho>0$, $\mathcal{E}_{f}[h(\xi_{\rho}^{\epsilon})+\alpha\mathcal{E}_{g}[\xi_{\rho}^{\epsilon}]]-\mathcal{E}_{f}[h(\xi^{\ast})+\alpha\mathcal{E}_{g}[\xi^{\ast}]]+\epsilon>0$. In this case,
	\begin{align*}
	\begin{split}
	& \lim\limits_{\rho\rightarrow0}\frac{F_{\epsilon}(\xi_{\rho}^{\epsilon})-F_{\epsilon}(\xi^{\epsilon})}{\rho}\\
	& \ \ =\lim\limits_{\rho\rightarrow0}\frac{1}{F_{\epsilon}(\xi_{\rho}^{\epsilon})+F_{\epsilon}(\xi^{\epsilon})}\frac{|F_{\epsilon}(\xi_{\rho}^{\epsilon})|^{2}-|F_{\epsilon}(\xi^{\epsilon})|^{2}}{\rho}\\
	& \ \ =\frac{1}{F_{\epsilon}(\xi^{\epsilon})}\{\hat{y}_{2}^{\epsilon}(0)(\mathcal{E}_{g}[\xi^{\epsilon}]-\pi_{0})+\hat{y}_{1}^{\epsilon}(0)(\mathcal{E}_{f}[h(\xi^{\epsilon})+\alpha\mathcal{E}_{g}[\xi^{\epsilon}]]-\mathcal{E}_{f}[h(\xi^{\ast})+\alpha\mathcal{E}_{g}[\xi^{\ast}]]+\epsilon)\}.
	\end{split}
	\end{align*}
	Therefore, we set
	\[
	h_{1}^{\epsilon}=\frac{\mathcal{E}_{f}[h(\xi^{\epsilon})+\alpha\mathcal{E}_{g}[\xi^{\epsilon}]]-\mathcal{E}_{f}[h(\xi^{\ast})+\alpha\mathcal{E}_{g}[\xi^{\ast}]]+\epsilon}{F_{\epsilon}(\xi^{\epsilon})}\geq0,\ h_{2}^{\epsilon}=\frac{\mathcal{E}_{g}[\xi^{\epsilon}]-\pi_{0}}{F_{\epsilon}(\xi^{\epsilon})},
	\]
	and use  (\ref{ekeland}) to get
	\begin{equation}
	\label{variation epsilon}
	h_{1}^{\epsilon}\hat{y}_{1}^{\epsilon}(0)+h_{2}^{\epsilon}\hat{y} _{2}^{\epsilon}(0)\geq-\sqrt{\epsilon}\|\xi_{\rho}^{\epsilon}-\xi^{\epsilon}\|_{\infty}.
	\end{equation}
	\emph{Case} 2: There exists a sequence $\{\rho_{n}\}\downarrow0$\ such that $\mathcal{E}_{f}[h(\xi_{\rho}^{\epsilon})+\alpha\mathcal{E}_{g}[\xi_{\rho}^{\epsilon}]]-\mathcal{E}_{f}[h(\xi^{\ast})+\alpha\mathcal{E}_{g}[\xi^{\ast}]]+\epsilon\leq0$. In this case, $F_{\epsilon}(\xi_{\rho_{n}}^{\epsilon})=|\mathcal{E}_{g}[\xi_{\rho_{n}}^{\epsilon}]-\pi_{0}|$, and hence,
	\[
	\lim\limits_{\rho_{n}\rightarrow0}\frac{F_{\epsilon}(\xi_{\rho_{n}}^{\epsilon})-F_{\epsilon}(\xi^{\epsilon})}{\rho_{n}}=\frac{1}{F_{\epsilon}(\xi^{\epsilon})}\hat{y}_{2}^{\epsilon}(0)(\mathcal{E}_{g}[\xi^{\epsilon}]-\pi_{0}).
	\]
	Therefore, we set  $h_{2}^{\epsilon}=\frac{\mathcal{E}_{g}[\xi^{\epsilon}]-\pi_{0}}{F_{\epsilon}(\xi^{\epsilon})}$, and $\ h_{1}^{\epsilon}=0 $ to get  (\ref{variation epsilon}).
	
	The above analysis ensures that, for a given $\epsilon$, we can find $h_1^\epsilon$ and $h_2^\epsilon$ such that
	$h_{1}^{\epsilon}\geq0$, $|h_{1}^{\epsilon}|^{2}+|h_{2}^{\epsilon}|^{2}=1$, and \eqref{variation epsilon} simultaneously hold. Thus, there is a subsequence, which we still denote by $(h_{1}^{\epsilon},h_{2}^{\epsilon})$, such that
	\[
	\lim\limits_{\epsilon\rightarrow0}(h_{1}^{\epsilon},h_{2}^{\epsilon})=(h_{1},h_{2}),
	\]
	for some $(h_{1},h_{2})\in\mathbb{R}^{2}$ with
	\[
	h_{1}\geq0,\ \, \mbox{ and}\ \, |h_{1}|^{2}+|h_{2}|^{2}=1.
	\]
	Furthermore, following the same line as we used in the proof of Lemma
	\ref{lemma estimation}, we can prove
	\[
	\lim_{\epsilon\rightarrow0}\|\hat{y}_{1}^{\epsilon}-\hat{y}_{1}\|_{\mathcal{S}^{\infty}}=0,~\text{and}~\lim_{\epsilon\rightarrow0}\|\hat{y}_{2}^{\epsilon}-\hat{y}_{2}\|_{\mathcal{S}^{\infty}}=0.
	\]
	This completes the proof for the case with $\mathcal{E}_{g}[\xi^{\ast}]=\pi_{0}$.

		If $\mathcal{E}_{g}[\xi^{\ast}]<\pi_{0}$,  by redefining $F_{\epsilon}(\xi)$ as
		\[
		F_{\epsilon}(\xi)=\{[(\mathcal{E}_{f}[h(\xi)+\alpha\mathcal{E}_{g}[\xi]]-\mathcal{E}_{f}[h(\xi^{\ast})+\alpha\mathcal{E}_{g}[\xi^{\ast}]])+\epsilon)^{+}]^{2}\}^{\frac{1}{2}},
		\]
		 the proof follows similarly to the case $ \mathcal{E}_{g}[\xi^{\ast}]=\pi_{0}$. 
\end{proof}

Now we are ready to present the proof of Theorem \ref{theorem}.

\begin{proof}
	[Proof of Theorem \ref{theorem}]  The proof will follow from the variational inequality $h_{1}\hat{y}_{1}(0)+h_{2}\hat{y}_{2}(0)\geq0$
	in Lemma ~\ref{lemma-h1-h2}.  By virtue of (\ref{adjoint}) and  (\ref{bsde variation}), it is easy to verify that both $\{\hat{y}_{1}(t)n(t), t\in [0, T]\}$ and $\{\hat{y}_{2}(t)m(t), t\in [0, T]\}$ are martingales, whereby
	\begin{equation}\label{y2-temp}
	\hat{y}_{2}(0)=\hat{y}_{2}(0)m(0)=E[m(T)\hat{y}_{2}(T)]=E[m(T)\cdot(\xi-\xi^{\ast})],
	\end{equation}
	and
	\[
	E[h_{1}\hat{y}_{1}(T)n(T)+h_{2}\hat{y}_{2}(T)m(T)]=h_{1}\hat{y}_{1}(0)+h_{2}\hat{y}_{2}(0)\geq0.
	\]
	Recallinig  $\hat y_1(T)$ and $\hat y_2(T)$ given in (\ref{bsde variation}) and substituting them  into the above equation yields
	\begin{equation}
	\label{inequality}
	E[(h_{1}{ h_x}(\xi^{\ast})n(T)+h_{2}m(T))\cdot(\xi-\xi^{\ast})]+\alpha h_{1}E[n(T)]\hat{y}_{2}(0)\geq0.
	\end{equation}
	Moreover, plugging \eqref{y2-temp} into  (\ref{inequality}) yields
	\begin{equation}
	\label{inequality 2}
	E[(h_{2}m(T)+h_{1}h_{x}(\xi^{\ast})n(T)+\alpha h_{1}m(T)E[n(T)])\cdot(\xi-\xi^{\ast})]\geq0.
	\end{equation}
	Since the above inequality holds for arbitrary $\xi\in U$, we claim that the optimal solution $\xi^*$ must take the form:
	\[
	\xi^{\ast}=
	\left\{
	\begin{aligned}
	&X, &\text{on}\ \{h_{2}m(T)+h_{1}h_{x}(\xi^{\ast})n(T)+\alpha h_{1}m(T)E[n(T)]>0\};\\
	&b,&\text{on}\ \{h_{2}m(T)+h_{1}h_{x}(\xi^{\ast})n(T)+\alpha h_{1}m(T)E[n(T)]=0\};\\
	&Y, &\text{on}\ \{h_{2}m(T)+h_{1}h_{x}(\xi^{\ast})n(T)+\alpha h_{1}m(T)E[n(T)]<0\},
	\end{aligned}
	\right.
	\]
	where $b\in L^{\infty}(\mathcal{F}_{T})$ satisfying $X\leq b\leq Y$. If $ \xi^*$ is not given as claimed above, for example, $\xi^{\ast}>X, $ \ on $\{h_{2}m(T)+h_{1}h_{\xi}(\xi^{\ast})n(T)+\alpha h_{1}m(T)E[n(T)]>0\}$ and $P\{h_{2}m(T)+h_{1}h_{\xi}(\xi^{\ast})n(T)+\alpha h_{1}m(T)E[n(T)]>0\}>0$, then we define
	\[
	\xi=
	\left\{
	\begin{aligned}
	&{ \frac{1}{2}(\xi^{\ast}+X),} & \  \text{on}\ \{h_{2}m(T)+h_{1}h_{x}(\xi^{\ast})n(T)+\alpha h_{1}m(T)E[n(T)]>0\};\\
	&\xi^{\ast},&\ \text{on}\ \{h_{2}m(T)+h_{1}h_{x}(\xi^{\ast})n(T)+\alpha h_{1} m(T)E[n(T)]\leq0\},
	\end{aligned}
	\right.
	\]
	to obtain
	\begin{align*}
	&  E[(h_{2}m(T)+h_{1}h_{x}(\xi^{\ast})n(T)+\alpha h_{1}m(T)E[n(T)])\cdot(\xi-\xi^{\ast})]\\
	&  = \frac{1}{2}\int_A(h_{2}m(T)+h_{1}h_{x}(\xi^{\ast})n(T)+\alpha h_{1}m(T)E[n(T)])\cdot (X-\xi^{\ast})dP<0,
	\end{align*}
	where $A:=\{h_{2}m(T)+h_{1}h_{x}(\xi^{\ast})n(T)+\alpha h_{1}m(T)E[n(T)]>0\}$.
	But this is a contradiction to (\ref{inequality 2}). Other cases can be addressed in the same way. This completes the proof.
\end{proof}

We now provide conditions under which there exists an optimal control.

\begin{proposition}
	\label{prop-ex} Suppose that $f$ and $g$ satisfy Assumptions~\ref{ass-qua} and \ref{ass-ini}, $h$ obeys Assumption~\ref{ass-h}. Further assume that $f$, $g$ are convex in  $(y,z)$ and $h$ is convex in $x$. Then, there exists an optimal solution to problem \eqref{g-object}.
\end{proposition}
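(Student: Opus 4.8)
The plan is to recast \eqref{g-object} as the minimization of a convex, weakly lower semicontinuous functional over a weakly compact convex set, and then to apply the direct method. I would work throughout in $L^{2}(\mathcal{F}_{T})$, on which $U$ is bounded (its elements are squeezed between $X,Y\in L^{\infty}$), convex, and strongly closed, hence weakly compact by reflexivity of $L^{2}$. The first task is to extract from the convexity hypotheses the two structural properties of the $g$-expectation functionals that drive everything else. Monotonicity of $\xi\mapsto\mathcal{E}_{g}[\xi]$ and $\xi\mapsto\mathcal{E}_{f}[\xi]$ is immediate from the comparison theorem \cite[Theorem 2.6]{kobylanski}. Convexity of $\xi\mapsto\mathcal{E}_{g}[\xi]$ follows from a linearization argument: writing $\xi=\lambda\xi^{a}+(1-\lambda)\xi^{b}$ with associated solutions $(y^{a},z^{a}),(y^{b},z^{b})$ of \eqref{bsde1}, the convex combination $(\lambda y^{a}+(1-\lambda)y^{b},\lambda z^{a}+(1-\lambda)z^{b})$ carries terminal value $\xi$ and, because $g$ is convex in $(y,z)$, is a supersolution of \eqref{bsde1}; comparison then gives $\mathcal{E}_{g}[\xi]\le\lambda\mathcal{E}_{g}[\xi^{a}]+(1-\lambda)\mathcal{E}_{g}[\xi^{b}]$, and the identical argument yields convexity of $\mathcal{E}_{f}$.

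Next I would establish convexity of the objective $J(\xi):=\mathcal{E}_{f}[h(\xi)+\alpha\mathcal{E}_{g}[\xi]]$. Since $h(\omega,\cdot)$ is convex and $\mathcal{E}_{g}$ is convex (so that $\alpha\mathcal{E}_{g}$ remains convex when $\alpha\ge0$), the terminal random variable $\Phi(\xi):=h(\xi)+\alpha\mathcal{E}_{g}[\xi]$ satisfies the pointwise inequality $\Phi(\lambda\xi^{a}+(1-\lambda)\xi^{b})\le\lambda\Phi(\xi^{a})+(1-\lambda)\Phi(\xi^{b})$. Feeding this into $\mathcal{E}_{f}$ and using first its monotonicity and then its convexity gives $J(\lambda\xi^{a}+(1-\lambda)\xi^{b})\le\lambda J(\xi^{a})+(1-\lambda)J(\xi^{b})$, i.e.\ $J$ is convex on $U$. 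It is precisely here that the sign of $\alpha$ enters, and the composition argument as written requires $\alpha\ge0$.

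To pass from strong to weak lower semicontinuity I would verify that $\mathcal{E}_{g}$ and $J$ are continuous on $U$ for the $L^{2}$-norm. If $\xi^{n}\to\xi$ in $L^{2}$ with the uniform bound $\|\xi^{n}\|_{\infty}\le\|X\|_{\infty}\vee\|Y\|_{\infty}$, then convergence in $L^{2}$ together with this bound upgrades to convergence in every $L^{p}$, $p<\infty$, and the BMO-based stability estimates of the type used in Lemma~\ref{lemma estimation} (together with the uniform $\mathcal{S}^{\infty}\times\mathrm{BMO}$ bounds of Lemma~\ref{exist_lemma}) then force $\mathcal{E}_{g}[\xi^{n}]\to\mathcal{E}_{g}[\xi]$ and $J(\xi^{n})\to J(\xi)$. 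A convex functional that is lower semicontinuous for the norm is lower semicontinuous for the weak topology; hence both $\mathcal{E}_{g}$ and $J$ are weakly sequentially lower semicontinuous on $U$.

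The conclusion is then the direct method. The feasible set $K:=\{\xi\in U:\mathcal{E}_{g}[\xi]\le\pi_{0}\}$ is the intersection of the weakly compact $U$ with a sublevel set of the weakly lsc convex functional $\mathcal{E}_{g}$, so $K$ is convex and weakly compact; it is nonempty because $\mathcal{E}_{g}[X]<\pi_{0}$ puts $X\in K$. Choosing a minimizing sequence $\xi^{n}\in K$ for $J$ and extracting a weakly convergent subsequence $\xi^{n}\rightharpoonup\xi^{\ast}\in K$, weak lower semicontinuity yields $J(\xi^{\ast})\le\liminf_{n}J(\xi^{n})=\inf_{K}J$, so $\xi^{\ast}$ is optimal. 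I expect the genuine difficulty to lie not in this soft compactness argument but in the two analytic inputs it rests on: the comparison-based convexity, and above all the $L^{2}$-continuity of the quadratic $g$-expectations under the mere quadratic growth of $f,g$, which is exactly where the BMO-martingale estimates developed for Lemmas~\ref{exist_lemma} and \ref{lemma estimation} must be brought to bear.
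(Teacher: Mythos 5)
Your proof is correct and follows essentially the same route as the paper's, which is a condensed version of the same direct method: convexity of $\xi\mapsto\mathcal{E}_{g}[\xi]$ and $\xi\mapsto\mathcal{E}_{f}[h(\xi)+\alpha\mathcal{E}_{g}[\xi]]$, strong continuity (the paper simply cites \cite[Proposition 2.3]{briand} where you sketch the BMO stability estimates), hence weak lower semicontinuity, combined with weak compactness of $U$. Your observation that the composition argument for convexity of the objective requires $\alpha\ge 0$ is a point the paper's ``standard argument'' glosses over, and is worth keeping.
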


\begin{proof}
	By a standard argument, it is easy to check that the functionals
	\[
	\xi\rightarrow\mathcal{E}_{g}[\xi]~\text{and}~\xi\rightarrow\mathcal{E}_{f}[h(\xi)+\alpha\mathcal{E}_{g}[\xi]]
	\]
	are convex.  Therefore, it follows from \cite[Proposition 2.3]{briand} that $\xi\rightarrow\mathcal{E}_{g}[\xi]$ and $\xi\rightarrow\mathcal{E}_{f}[h(\xi)+\alpha\mathcal{E}_{g}[\xi]]$ are strongly continuous,  and thus weakly lower-semicontinuous.
	Furthermore, $U$ is weakly compact since it is convex, closed and bounded.  Therefore,  the minimum in problem~\eqref{g-object} is attained.
\end{proof}

In general the optimal solution $\xi^{\ast}$ may not satisfy $\mathcal{E}_{g}[\xi^{\ast}]=\pi_{0}$. In the following, we provide sufficient conditions under which the constraint is binding upon the optimal solution $\xi^{\ast}$.

\begin{proposition}
	Suppose that $f$ and $g$ satisfy Assumptions~\ref{ass-qua} and \ref{ass-ini}, $h$ obeys Assumption~\ref{ass-h}, $\alpha=0$ and $h$ is strictly decreasing.  If $\xi^{\ast}$ is an optimal solution to  problem \eqref{g-object}, then $\xi^{\ast}$ must satisfy $\mathcal{E}_{g}[\xi^{\ast}]=\pi_{0}$.
\end{proposition}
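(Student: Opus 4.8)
The plan is to argue by contradiction, exploiting the strict monotonicity of $h$ together with a strict comparison principle for the quadratic $g$-expectations. Since $\alpha=0$, the objective reduces to $\mathcal{E}_{f}[h(\xi)]$. Assume, for contradiction, that the optimal $\xi^{\ast}$ satisfies $\mathcal{E}_{g}[\xi^{\ast}]<\pi_{0}$. First I would observe that $\xi^{\ast}<Y$ on a set of positive probability: indeed, if $\xi^{\ast}=Y$ $P$-a.s., then the comparison theorem for quadratic BSDEs would force $\mathcal{E}_{g}[\xi^{\ast}]=\mathcal{E}_{g}[Y]>\pi_{0}$, contradicting our assumption. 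Hence $P(\xi^{\ast}<Y)>0$, which leaves room to perturb $\xi^{\ast}$ upward while staying inside $U$.

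Next I would introduce the admissible perturbations $\xi^{\delta}=\xi^{\ast}+\delta(Y-\xi^{\ast})$ for $\delta\in(0,1]$; since $X\le\xi^{\ast}\le\xi^{\delta}\le Y$, each $\xi^{\delta}$ lies in $U$. To guarantee that $\xi^{\delta}$ remains feasible for small $\delta$, I would invoke the strong continuity of the map $\xi\mapsto\mathcal{E}_{g}[\xi]$ with respect to $\|\cdot\|_{\infty}$, which is the property from \cite[Proposition 2.3]{briand} already used in the proof of Proposition \ref{prop-ex}. Because $\|\xi^{\delta}-\xi^{\ast}\|_{\infty}=\delta\|Y-\xi^{\ast}\|_{\infty}\to0$, we obtain $\mathcal{E}_{g}[\xi^{\delta}]\to\mathcal{E}_{g}[\xi^{\ast}]<\pi_{0}$, so there is $\delta_{0}>0$ with $\mathcal{E}_{g}[\xi^{\delta}]\le\pi_{0}$ for all $\delta\in(0,\delta_{0}]$; for each such $\delta$, the candidate $\xi^{\delta}$ satisfies the constraint in \eqref{g-object}.

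Finally I would compare the objective values. Since $h$ is strictly decreasing and $\xi^{\delta}\ge\xi^{\ast}$ with $\xi^{\delta}>\xi^{\ast}$ on $\{\xi^{\ast}<Y\}$, a set of positive probability, we have $h(\xi^{\delta})\le h(\xi^{\ast})$ $P$-a.s. and $h(\xi^{\delta})<h(\xi^{\ast})$ on a set of positive probability. A \emph{strict} comparison theorem for the quadratic BSDE defining $\mathcal{E}_{f}$ then yields $\mathcal{E}_{f}[h(\xi^{\delta})]<\mathcal{E}_{f}[h(\xi^{\ast})]$, contradicting the optimality of $\xi^{\ast}$ and forcing $\mathcal{E}_{g}[\xi^{\ast}]=\pi_{0}$. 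The main obstacle is precisely this strict inequality, as the comparison theorem \cite[Theorem 2.6]{kobylanski} delivers only ``$\le$''. To upgrade it to ``$<$'' at time $0$, I would linearize the difference of the two $f$-BSDE solutions, writing $\mathcal{E}_{f}[h(\xi^{\ast})]-\mathcal{E}_{f}[h(\xi^{\delta})]=E^{\mathbb{Q}}[\Gamma_{T}(h(\xi^{\ast})-h(\xi^{\delta}))]$ with a strictly positive adjoint weight $\Gamma$ and an equivalent measure $\mathbb{Q}$, exactly the mechanism behind the adjoint representation in \eqref{adjoint}--\eqref{y2-temp}. Since the integrand is nonnegative and strictly positive on a set of positive probability, the expectation is strictly positive. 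Care must be taken to ensure that the Girsanov density defining $\mathbb{Q}$ and the weight $\Gamma$ are genuinely integrable, which is where the BMO estimates underlying Lemma \ref{exist_lemma} enter.
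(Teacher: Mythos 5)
Your proposal is correct and follows essentially the same route as the paper: argue by contradiction from $\mathcal{E}_{g}[\xi^{\ast}]<\pi_{0}$, produce a feasible upward perturbation $\hat{\xi}\geq\xi^{\ast}$ with $P(\hat{\xi}>\xi^{\ast})>0$ still satisfying the constraint, and then combine the strict monotonicity of $h$ with the strict comparison theorem for quadratic BSDEs to contradict optimality. The only difference is cosmetic: the paper simply cites \cite[Proposition 2.3]{briand} for the existence of such a $\hat{\xi}$ and \cite[Theorem 3.2]{ma} for the strict comparison, whereas you construct the perturbation explicitly as $\xi^{\ast}+\delta(Y-\xi^{\ast})$ and sketch the linearization/Girsanov argument behind the strict inequality.
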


\begin{proof} 
	Suppose by way of contradiction that $\xi^{\ast}$ is optimal but $\mathcal{E}_{g}[\xi^{\ast}]<\pi_{0}$. Then,  it follows from \cite[Proposition 2.3]{briand} that one can construct a random variable $\hat{\xi}$ such that $\hat{\xi}\geq\xi^{\ast}$, $\mathbb{P}(\hat{\xi}> \xi^{\ast})>0$ and $\mathcal{E}_{g}[\hat{\xi}]\leq \pi_{0}$.  Moreover, we have $h(\xi^{\ast})\geq h(\hat{\xi})$, $\mathbb{P}(h(\xi^{\ast})> h(\hat{\xi}))>0$ since $h$ is strictly deceasing. Therefore, it follows from the strict comparison theorem for quadratic BSDEs as stated in \cite[Theorem 3.2]{ma} that $\mathcal{E}_{f}[h(\xi^{\ast})]>\mathcal{E}_{f}[h(\hat{\xi})]$, which contradicts the optimality of $\xi^{\ast}$.
\end{proof}

Relying on the classical Lagrange approach and combining with Lemma \ref{lemma estimation}, the following result follows directly from a similar technique as in \cite{Ji-Zhou-PT}.

\begin{proposition}
	\label{prop-lagran}
	Suppose that $f$ and $g$ satisfy Assumptions \ref{ass-qua} and \ref{ass-ini}, $h$ obeys Assumption \ref{ass-h}, $\alpha=0$, $f$, $g$ and $h$ are convex, and $h$ is strictly decreasing. Then, there exists an optimal solution  $\xi^{\ast}$ to problem \eqref{g-object} satisfying $\mathcal{E}_{g}[\xi^{\ast}]= \pi_{0}$ and
	\begin{align}
	\label{sou-lagran}
	\xi^{\ast}=YI_{\{v m(T)+h_{x}(\xi^{\ast})n(T)<0\}}+XI_{\{vm(T)+h_{x}(\xi^{\ast})n(T)>0\}}+bI_{\{v m(T)+h_{x}(\xi^{\ast})n(T)=0\}}
	\end{align}
	for some constant $v>0$ and $b\in L^{\infty}(\mathcal{F}_{T})$ satisfying $X\leq b\leq Y$, where  $m$ and $n$ are given in \eqref{adjoint}. Conversely, if there exist constant $v>0$ and $b\in L^{\infty}(\mathcal{F}_{T})$ satisfying $X\leq b\leq Y$ such that $\xi^{\ast}$ given by \eqref{sou-lagran} satisfies $\mathcal{E}_{g}[\xi^{\ast}]= \pi_{0}$, then $\xi^{\ast}$ is an optimal solution to problem \eqref{g-object}.
\end{proposition}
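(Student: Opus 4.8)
The plan is to treat \eqref{g-object} with $\alpha=0$ as a convex program and to exhibit a Lagrange multiplier that converts the constrained problem into an unconstrained one already covered by Theorem~\ref{theorem}. For the first assertion I would begin by invoking Proposition~\ref{prop-ex}, whose convexity hypotheses are exactly those assumed here, to guarantee that an optimal $\xi^{\ast}$ exists; the preceding proposition (valid because $\alpha=0$ and $h$ is strictly decreasing) then forces the constraint to bind, i.e. $\mathcal{E}_{g}[\xi^{\ast}]=\pi_{0}$. Applying Theorem~\ref{theorem} with $\alpha=0$ kills the term $\alpha h_{1}m(T)E[n(T)]$, so $\xi^{\ast}$ has the bang-bang form with switching coefficient $h_{2}m(T)+h_{1}h_{x}(\xi^{\ast})n(T)$, where $h_{1}\geq0$ and $h_{1}^{2}+h_{2}^{2}=1$. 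The key point is that the linear adjoint equations \eqref{adjoint} integrate to stochastic exponentials, whence $m(T),n(T)>0$ a.s. If $h_{1}=0$, the coefficient is $h_{2}m(T)$, which keeps a constant sign and forces either $\xi^{\ast}\equiv X$ or $\xi^{\ast}\equiv Y$; the first contradicts $\mathcal{E}_{g}[\xi^{\ast}]=\pi_{0}>\mathcal{E}_{g}[X]$ and the second violates feasibility since $\mathcal{E}_{g}[Y]>\pi_{0}$. Hence $h_{1}>0$, and setting $v:=h_{2}/h_{1}$ recovers \eqref{sou-lagran}.

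It remains to upgrade $v\geq0$ to $v>0$. Because $h$ is strictly decreasing we have $h_{x}(\xi^{\ast})<0$, so $h_{x}(\xi^{\ast})n(T)<0$ a.s.; if $v\leq0$ then $vm(T)+h_{x}(\xi^{\ast})n(T)<0$ a.s., which again forces $\xi^{\ast}\equiv Y$ and contradicts feasibility. Therefore $v>0$, completing the necessity-plus-existence part.

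For the converse, I would turn the explicit form \eqref{sou-lagran} into a sufficiency statement via convexity. Under the stated convexity of $f,g,h$, the functional $L(\xi):=\mathcal{E}_{f}[h(\xi)]+v\mathcal{E}_{g}[\xi]$ is convex on the convex set $U$, exactly as in the proof of Proposition~\ref{prop-ex}. By Lemma~\ref{lemma estimation} together with the martingale identities $\hat{y}_{1}(0)=E[n(T)h_{x}(\xi^{\ast})(\xi-\xi^{\ast})]$ and $\hat{y}_{2}(0)=E[m(T)(\xi-\xi^{\ast})]$ derived in the proof of Theorem~\ref{theorem}, the one-sided directional derivative of $L$ at $\xi^{\ast}$ in direction $\xi-\xi^{\ast}$ equals $E[(h_{x}(\xi^{\ast})n(T)+vm(T))(\xi-\xi^{\ast})]$. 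The form \eqref{sou-lagran} makes this nonnegative for every $\xi\in U$: on $\{vm(T)+h_{x}(\xi^{\ast})n(T)>0\}$ one has $\xi^{\ast}=X\leq\xi$, on the set where the coefficient is negative one has $\xi^{\ast}=Y\geq\xi$, and the zero set contributes nothing. Since $L$ is convex, nonnegativity of all feasible directional derivatives implies $\xi^{\ast}$ minimizes $L$ over $U$. Using complementary slackness $\mathcal{E}_{g}[\xi^{\ast}]=\pi_{0}$ and $v>0$, for any feasible $\xi$ (so $\mathcal{E}_{g}[\xi]\leq\pi_{0}$),
\[
\mathcal{E}_{f}[h(\xi^{\ast})]=L(\xi^{\ast})\leq L(\xi)=\mathcal{E}_{f}[h(\xi)]+v(\mathcal{E}_{g}[\xi]-\pi_{0})\leq\mathcal{E}_{f}[h(\xi)],
\]
which is precisely the optimality of $\xi^{\ast}$.

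I expect the main obstacle to be the strict positivity $v>0$ (equivalently, ruling out the degenerate case $h_{1}=0$), since this is the one place where the strict monotonicity of $h$, the feasibility gap $\mathcal{E}_{g}[X]<\pi_{0}<\mathcal{E}_{g}[Y]$, and the a.s.\ positivity of the adjoint processes $m(T),n(T)$ must all be combined. The remaining ingredients are either already encapsulated in Theorem~\ref{theorem} and Lemma~\ref{lemma estimation} or follow from routine convex-analysis arguments, which is why the statement can be said to follow ``directly'' once the Lagrangian reformulation is in place.
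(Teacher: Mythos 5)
Your proposal follows essentially the same route the paper intends: the paper does not write out a proof of Proposition~\ref{prop-lagran} at all, but merely asserts that it ``follows from the classical Lagrange approach combined with Lemma~\ref{lemma estimation}, as in Ji--Zhou'', and your argument is a faithful execution of exactly that plan. The ingredients you assemble --- existence from Proposition~\ref{prop-ex}, the binding constraint from the preceding proposition, the bang-bang form from Theorem~\ref{theorem} with $\alpha=0$, nondegeneracy $h_{1}>0$ from the strict positivity of the stochastic exponentials $m(T),n(T)$ together with the standing assumption $\mathcal{E}_{g}[X]<\pi_{0}<\mathcal{E}_{g}[Y]$, and sufficiency via convexity of the Lagrangian plus complementary slackness --- are precisely the ones the citation points to, and the directional-derivative identity you use is the one established inside the proof of Theorem~\ref{theorem}.

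One step deserves a patch: you infer $h_{x}(\xi^{\ast})<0$ a.s.\ from ``$h$ strictly decreasing,'' but a continuously differentiable strictly decreasing function only satisfies $h_{x}\le 0$ (the derivative may vanish on a nowhere dense set of positive Lebesgue measure), so the exclusion of $v\le 0$ is not airtight as written. The case $v<0$ is still fine, since then $vm(T)+h_{x}(\xi^{\ast})n(T)\le vm(T)<0$ a.s.\ forces $\xi^{\ast}=Y$; but for $v=0$ the coefficient could vanish on $\{h_{x}(\xi^{\ast})=0\}$ and the formula alone does not force $\xi^{\ast}=Y$ there. A clean fix is to rule out $v=0$ directly: if $v=0$ the variational inequality \eqref{inequality 2} reduces to $E[h_{x}(\xi^{\ast})n(T)(\xi-\xi^{\ast})]\ge 0$ for all $\xi\in U$, yet taking $\xi=Y$ and using $h_{x}\le 0$, $n(T)>0$, $P(\xi^{\ast}<Y)>0$ (which follows from $\mathcal{E}_{g}[\xi^{\ast}]=\pi_{0}<\mathcal{E}_{g}[Y]$ and the strict comparison theorem), strict monotonicity of $h$ gives a strict decrease of $\mathcal{E}_{f}[h(\cdot)]$ along this perturbation, contradicting optimality unless one interprets ``strictly decreasing'' as $h_{x}<0$, which is evidently the reading the authors have in mind. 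With that reading, or with the patch above, your argument is complete.
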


\section{Applications} \label{sec:app}

\subsection{ Partial hedging with ambiguity}

Partial hedging refers to the situation in which  a bounded contingent claim $X$ is hedged only partially at a hedging cost $\pi_0$ that is smaller than the cost  of  perfectly replicating or superhedging the claim. In the context of a complete market model with pricing measure $Q$, the partial hedging problem can be formulated as follows,
\begin{equation}\label{partial hedging problem}
\text{minimize $\rho(X-\xi)$ over random variables $\xi$ with $0\le\xi\le X$ and $E^Q[\xi]\le\pi_0$,}
\end{equation}
where $\rho$ is a convex risk measure and $\pi_0\in(0,E^Q[X])$ is the cost constraint; see Section 3 in \cite{SchiedPuebla} for the link between the static problem \eqref{partial hedging problem} and the actual dynamic hedging problem as well as for explicit solutions in some special cases, including the case in which $\rho$ is Expected Shortfall (sometimes also called Average Value at Risk).
Starting from a number of seminal publications such as \cite{CvitanicKaratzas,Kulldorff,follmer1,Sekine}, the  problem \eqref{partial hedging problem} has received significant attention in the past two decades; see, e.g., \cite{rudloff,melnikov, cong1,cong2}.  We refer to  \cite{EmbrechtsSchiedWang} for some recent advances in the case where the risk measure $\rho$ is equal to Value-at-Risk,  a utility-based shortfall risk measure, or a divergence risk measure.  With the exception of the entropic risk measure, however, all these risk measures do not satisfy our present condition of dynamic consistency \cite{KupperSchachermayer}.

In the following,  we consider  a partial hedging problem under the general  $g$-expectation. That is,
\begin{equation}
\left\{
\begin{aligned}
\min\limits_{\xi\in U} &\  \mathcal{E}_f[ X-\xi]\\
s.t. & \ \mathcal{E}_g[\xi]\leq\pi_{0},
\end{aligned}
\right.
\label{ambi-gener-qua}
\end{equation}
where  the quadratic generator $ f(t, y, z)$ satisfies  Assumptions \ref{ass-qua} and \ref{ass-ini} and $ g(t, y, z)=\mu_{t} z$ for a bounded adapted process $\mu_t$.
Define $Q$  by
\begin{equation*}
\frac{dQ}{dP}:=e^{-\frac{1}{2}\int_{0}^{T}|\mu_{t}|^{2}dt-\int_{0}^{T}\mu_{t}dW_{t}},
\end{equation*}
then $\mathcal{E}_g[\xi]=E^{Q}[\xi]$.
In addition,  we assume that $f(t,y,z)$ is convex with respect to $(y,z)$.  Hence $\mathcal{E}_f[\cdot]$ is a strongly time-consistent convex risk measure (see \cite{gianin,DelbaenPengRosazza}).  The notion of strong time consistency means that
$$\mathcal{E}_f[\mathcal{E}_f[\xi|\mathcal{F}_{t}]|\mathcal{F}_s]= \mathcal{E}_f[\xi|\mathcal{F}_{s}], \ 0\leq s\leq t \leq T. $$
We refer  Peng \cite{peng2004} and  Ma and Yao  \cite{ma} for a detailed discussion in the context of $g$-expectations. For general dynamic risk measures,  strong time consistency has the following interpretation. If the risk of a financial position $\xi$ is first assessed at time $t$ and the risk of that assessment is re-assessed at an earlier time $s\le t$, then the result is identical to the result when assessing the risk of $\xi$ directly at time $s$. From a mathematical point, strong time consistency is similar to the Bellman principle in stochastic optimal control. It thus facilitates the application of control techniques to solve optimization problems.
Applying  Proposition \ref{prop-lagran}, we have  the following characterization results for the shape of the optimal partial hedging strategy.

\begin{proposition}\label{prop 3.1}
	There exists an optimal solution $\xi^{\ast}$ to problem \eqref{ambi-gener-qua}. Moreover there exists a constant $v>0$ and an $\mathcal{F}_{T}$-measurable random variable $b \in[0,X]$ such that
	\begin{align}
	\label{sou-lagran-ambi}
	\xi^{\ast}=XI_{\{v m(T)-n(T)<0\}}+bI_{\{vm(T)-n(T)=0\}}
	\end{align}
	and $E^{Q}[\xi^{\ast}]= \pi_{0}$, where $m(\cdot)$ and $n(\cdot)$ are solutions to the following stochastic differential equations
	\[
	\left\{
	\begin{aligned}
	dn(t)&=f_{y}(t,y_{1}^{\ast}(t),z_{1}^{\ast}(t))n(t)dt+f_{z}(t,y_{1}^{\ast }(t),z_{1}^{\ast}(t))n(t)dW_{t},\\
	dm(t)&=\mu_t m(t)dW_{t},\\
	n(0)&=1,\ m(0)=1.
	\end{aligned}
	\right.
	\]
	and $z_1^*$ is the second component of the solution to the BSDE 
	\begin{equation}
	\left\{
	\begin{aligned}
	dy_{1}^*(t)&=-f(t,y_1^*(t),z_1^*(t))dt+z_{1}^*(t)dW_{t},\\ y_{1}^*(T)&=X-\xi^*.
	\end{aligned}
	\right.
	\end{equation}
	Conversely, if $\xi^{\ast} \in U$ has the form (\ref{sou-lagran-ambi}) with $E^{Q}[\xi^{\ast}]= \pi_{0}$, then $\xi^{\ast}$ is optimal for problem (\ref{ambi-gener-qua}).
\end{proposition}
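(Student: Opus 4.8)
The plan is to recognize problem \eqref{ambi-gener-qua} as the special instance of the general constrained problem \eqref{g-object} obtained by setting $\alpha=0$, taking $h(x)=X-x$, and using the admissible set $U=\{\xi:0\le\xi\le X\}$, so that the lower and upper bounds playing the role of the generic $X,Y$ in \eqref{g-object} are $0$ and the claim $X$, respectively. Once this identification is made, both assertions of the statement will follow by specializing Proposition \ref{prop-lagran}; the only work is to check that the present data meet the hypotheses of that proposition and then to read off the stated formula.

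First I would verify the hypotheses. The generator $g(t,y,z)=\mu_t z$ is progressively measurable and affine in $(y,z)$, hence continuously differentiable and convex; since $\mu$ is bounded, $|g(t,y,z)|\le\|\mu\|_\infty|z|$ fits the quadratic growth bound, while $g_y\equiv 0$ and $g_z\equiv\mu_t$ are bounded and (being independent of $(y,z)$) trivially Lipschitz, so Assumption \ref{ass-qua} holds; and $g(t,y,0)=0$ gives Assumption \ref{ass-ini}. The map $h(x)=X-x$ is $\mathcal{F}_T$-measurable and affine, with $h_x\equiv -1$; using that $X$ is bounded one checks $|h(0)|=|X|\le C$, $|h_x|=1$, and the Lipschitz bound on $h_x$ holds trivially, so Assumption \ref{ass-h} is satisfied, and $h$ is convex and strictly decreasing. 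Since $f$ is assumed to satisfy Assumptions \ref{ass-qua}, \ref{ass-ini} and to be convex, all the hypotheses of Proposition \ref{prop-lagran} are in force. Finally, because $\mathcal{E}_g[\xi]=E^Q[\xi]$, the standing nondegeneracy requirement $\mathcal{E}_g[0]<\pi_0<\mathcal{E}_g[X]$ reduces to $0<\pi_0<E^Q[X]$, which is exactly the assumed range of $\pi_0$.

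It then remains to translate the conclusion of Proposition \ref{prop-lagran}. With $g_y\equiv 0$ and $g_z\equiv\mu_t$, the adjoint equation \eqref{adjoint} for $m$ collapses to $dm(t)=\mu_t m(t)\,dW_t$, $m(0)=1$, while the equation for $n$ is unchanged; these are precisely the stochastic differential equations in the statement, and the BSDE for $(y_1^*,z_1^*)$ with terminal value $h(\xi^*)=X-\xi^*$ is the one displayed. Since $h_x(\xi^*)\equiv -1$, the switching variable $vm(T)+h_x(\xi^*)n(T)$ becomes $vm(T)-n(T)$; substituting the upper bound $X$ on $\{vm(T)-n(T)<0\}$, the lower bound $0$ on $\{vm(T)-n(T)>0\}$, and $b$ on the zero set yields exactly \eqref{sou-lagran-ambi} together with $E^Q[\xi^*]=\mathcal{E}_g[\xi^*]=\pi_0$ and $b\in[0,X]$, and the converse assertion is the specialization of the converse half of Proposition \ref{prop-lagran}. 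The main obstacle here is not conceptual but bookkeeping: one must keep the contingent claim $X$ notationally distinct from the generic lower bound (also denoted $X$) in \eqref{g-object}, and confirm that the linear generator $\mu_t z$ genuinely falls within the quadratic-growth Assumption \ref{ass-qua}; both points are routine given the boundedness of $\mu$ and $X$.
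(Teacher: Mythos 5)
Your proposal is correct and follows exactly the route the paper takes: the paper derives Proposition \ref{prop 3.1} by directly specializing Proposition \ref{prop-lagran} to $\alpha=0$, $h(x)=X-x$, $g(t,y,z)=\mu_t z$, and bounds $0\le\xi\le X$. Your additional verification of Assumptions \ref{ass-qua}--\ref{ass-h} and of the nondegeneracy condition $0<\pi_0<E^Q[X]$ is the bookkeeping the paper leaves implicit, and it is carried out correctly.
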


\subsection{ Fundraising under ambiguity}

Consider an agent who wants to raise capital $\alpha\geq 0$ today by issuing a contingent claim $\xi$ with a fixed maturity and suppose that the liability of the agent at maturity needs to be bounded by some constant $K$. 
Using the notation of  the preceding  section, we formulate the optimal fundraising problem as follows:
\begin{equation}
\label{liability}
\left\{
\begin{aligned}
\inf\limits_{\xi\in U}& \mathcal{E}_f[ -u(-\xi)]\\
s.t. & \ E^{Q}[\xi]\geq\alpha,
\end{aligned}
\right.
\end{equation}
where $U=\{\xi\in L^{\infty}(\mathcal{F}_{T}):0\leq\xi\leq K\}$, the quadratic generator $ f(t, y, z)$ satisfies  Assumptions \ref{ass-qua} and \ref{ass-ini},  and $u$ is a concave utility function such that $-u$ satisfies Assumption \ref{ass-h}.

The robust  fundraising problem has been approached in a number of settings and with several techniques; see, e.g., \cite{CvitanicKaratzas,FoellmerSchiedWeber,SchiedPuebla} and the references therein. Our discussion here differs from those previous  approaches in at least  two aspects.
First,   we focus on the general $g$-expectations as our cost criterion.
Second, we will characterize the solution to the problem using BSDEs.

We further assume that $\mu_{t}$ is bounded and let $g(t,z)=\mu_{t} z$.  Then, problem~\eqref{liability} is equivalent to
\[
\left\{
\begin{aligned}
\inf\limits_{\tilde{\xi}\in \tilde{U}} &\  \mathcal{E}_f[ -u(\tilde{\xi})]\\
s.t. & \ \mathcal{E}_g[\tilde{\xi}]\leq-\alpha
\end{aligned}
\right.
\]
in the sense that $\xi^*=-\tilde{\xi}^*$ solves problem~\eqref{liability} if and only if $\tilde{\xi}^*$ is a solution to the above problem, \ where $\tilde{U}=\{\tilde{\xi}\in L^{\infty}(\mathcal{F}_{T}):-K\leq\tilde{\xi}\leq0\}$. Applying Proposition \ref{prop-lagran} to the above problem and translating the result for problem~\eqref{liability}, we obtain the following proposition.

\begin{proposition}
	If $\xi^{\ast}$ is an optimal solution to problem \eqref{liability}, then there exist a constant $v>0$ and an $\mathcal{F}_{T}$-measurable random variable $b \in[0,K]$ such that
	\begin{align}
	\label{sou-lagran-amb}
	\xi^{\ast}=KI_{\{v m(T)-u_{x}(-\xi^{\ast})n(T)>0\}}+bI_{\{v m(T)-u_{x}(-\xi^{\ast})n(T)=0\}}
	\end{align}
	and $E^{Q}[\xi^{\ast}]= \alpha$, where $m(\cdot)$, $n(\cdot)$ are solutions
	to
	\[
	\left\{
	\begin{aligned}
	dn(t)&=f_{y}(t,y_{1}^{\ast}(t),z_{1}^{\ast}(t))n(t)dt+f_{z}(t,y_{1}^{\ast }(t),z_{1}^{\ast}(t))n(t)dW_{t},,\\
	dm(t)&=\mu_t m(t)dW_{t},\\ n(0)&=1,\ \ m(0)=1,
	\end{aligned}
	\right.
	\]
	and $z_1^*$ is the second component of the solution to the BSDE 
	\begin{equation}
	\left\{
	\begin{aligned}
	dy_{1}^*(t)&=-f(t,y_1^*(t),z_1^*(t))dt+z_{1}^*(t)dW_{t},\\ y_{1}^*(T)&=-u(-\xi^*).
	\end{aligned}
	\right.
	\end{equation}
	Conversely, if $\xi^{\ast} \in U$ has the form (\ref{sou-lagran-amb}) with $E^{Q}[\xi^{\ast}]= \alpha$, then $\xi^{\ast}$ is optimal for problem (\ref{liability}).
\end{proposition}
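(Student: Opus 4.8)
The plan is to recognize the statement as a direct corollary of Proposition~\ref{prop-lagran}, obtained through the change of variable $\tilde\xi=-\xi$ that the paper records just above the statement, and then to translate the conclusion back. First I would make that reduction precise: the map $\xi\mapsto-\xi$ is a bijection from $U=\{0\le\xi\le K\}$ onto $\tilde U=\{-K\le\tilde\xi\le0\}$, it carries the objective $\mathcal{E}_f[-u(-\xi)]$ to $\mathcal{E}_f[-u(\tilde\xi)]$, and, using $\mathcal{E}_g[\cdot]=E^Q[\cdot]$ for the linear generator $g(t,z)=\mu_t z$, it turns the constraint $E^Q[\xi]\ge\alpha$ into $\mathcal{E}_g[\tilde\xi]\le-\alpha$. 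Hence $\xi^*$ solves \eqref{liability} if and only if $\tilde\xi^*=-\xi^*$ solves the transformed problem, which is exactly \eqref{g-object} with coupling constant $0$, with $X=-K$, $Y=0$, $\pi_0=-\alpha$, $h=-u$, and generators $f$ and $g(t,z)=\mu_t z$.

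Second, I would verify that this transformed problem satisfies every hypothesis of Proposition~\ref{prop-lagran}. Since $u$ is concave and strictly increasing, $h=-u$ is convex, strictly decreasing, and inherits Assumption~\ref{ass-h}; the linear generator $g(t,z)=\mu_t z$ with bounded $\mu$ satisfies Assumptions~\ref{ass-qua} and \ref{ass-ini} and is convex; and $f$ is convex (the standing convexity hypothesis required by Proposition~\ref{prop-lagran}). The nondegeneracy condition $\mathcal{E}_g[X]<\pi_0<\mathcal{E}_g[Y]$ reads $-K<-\alpha<0$, i.e. $0<\alpha<K$, because $\mathcal{E}_g$ fixes the constants $-K$ and $0$. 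Proposition~\ref{prop-lagran} then produces an optimal $\tilde\xi^*$ with $\mathcal{E}_g[\tilde\xi^*]=-\alpha$ of the bang-bang form \eqref{sou-lagran}, in which $g_y\equiv0$ and $g_z\equiv\mu_t$ collapse the adjoint equation for $m$ to $dm(t)=\mu_t m(t)\,dW_t$, while $n$ solves the same equation as stated.

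Third, I would translate through $\xi^*=-\tilde\xi^*$. Substituting $Y=0$, $X=-K$, and $h_x(\tilde\xi^*)=-u_x(-\xi^*)$ into \eqref{sou-lagran} makes the $Y$-branch vanish and yields $\tilde\xi^*=-K\,I_{\{vm(T)-u_x(-\xi^*)n(T)>0\}}+b\,I_{\{vm(T)-u_x(-\xi^*)n(T)=0\}}$ with $b\in[-K,0]$; negating and relabelling $-b$ as $b\in[0,K]$ gives precisely \eqref{sou-lagran-amb}, and $\mathcal{E}_g[\tilde\xi^*]=-\alpha$ becomes $E^Q[\xi^*]=\alpha$. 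The converse is the image of the converse half of Proposition~\ref{prop-lagran} under the same bijection: if $\xi^*$ has the form \eqref{sou-lagran-amb} with $E^Q[\xi^*]=\alpha$, then $-\xi^*$ has the form \eqref{sou-lagran} with binding constraint, hence is optimal for the transformed problem, and therefore $\xi^*$ is optimal for \eqref{liability}.

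I do not expect a genuine analytic obstacle, since all of the real work is already encoded in Proposition~\ref{prop-lagran}; the task here is a change of variables. The points that demand care are purely bookkeeping: tracking signs so that the upper bound $Y=0$ truly eliminates one branch while $X=-K$ flips to $+K$, confirming the identity $u_x(\tilde\xi^*)=u_x(-\xi^*)$ so the indicator sets coincide with those in \eqref{sou-lagran-amb}, checking $-b\in[0,K]$, and recording that the strict monotonicity of $u$ is precisely what licenses invoking the binding-constraint version of Proposition~\ref{prop-lagran}.
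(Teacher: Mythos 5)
Your proposal is correct and follows exactly the route the paper intends: the paper gives no separate proof of this proposition, stating only that it follows by applying Proposition~\ref{prop-lagran} to the transformed problem in $\tilde\xi=-\xi$ and translating back, which is precisely the reduction you carry out (and you fill in the sign bookkeeping, the identification $h_x(\tilde\xi^\ast)=-u_x(-\xi^\ast)$, and the hypothesis checks more carefully than the paper does).
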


\subsection{Neyman--Pearson lemma for quadratic $g$-probabilities}

For a generator  $g$ satisfying Assumptions \ref{ass-qua} and \ref{ass-ini}, define  the $g$-probability as $P_{g}(A)=\mathcal{E}
_{g}[I_{A}]$, for $A\in\mathcal{F}_{T}$.
In this subsection,  we will consider statistical tests for
$g$-probabilities.

Suppose $g$ and $f$ satisfy Assumptions \ref{ass-qua} and \ref{ass-ini}. A test
for a simple null hypothesis $H_{0}: \bar{g}=g$ versus a simple alternative
hypothesis $H_{1}: \bar{g}=f$ is an $\mathcal{F}_{T}$-measurable random variable
$\xi: \Omega\rightarrow\{0,1\}$, which rejects $H_{0}$ on the event $\{\xi=1
\}$. For $0 <\pi_{0} <1$, the test is said to have a significance level
$\pi_{0}$ if $P_{g}(\xi=1)\leq\pi_{0}$. For an outcome of the test, i.e., a
sample $\omega$ from the sample space $\Omega$, the hypothesis  $g$ is
rejected (resp. accepted) if $\omega\in\{\xi=1\}$ (resp. $\omega\in\{\xi=0\}$). Thus, $P_{g}(\xi=1)$ is the $g$-probability of Type I error ( i.e., the probability of
rejecting $H_{0}$ when it is true), whereas $P_{f}(\xi=0)$ is the $g$-probability
of Type II error ( i.e., the probability of accepting $H_{0}$ when it is false). 
A conventional way to approach the hypothesis test is to find a test that minimizes the $g$-probability of a Type II
error among all tests that keep the $g$-probability of a Type I error below a
given acceptable significance level $\pi_{0}\in(0,1)$. The optimal simple test
may not exist  generally, and thus, one needs to consider a randomized test  represented by a  random variable $\xi:\Omega\rightarrow[0,1]$, which means that if the outcome
$\omega\in\Omega$ is observed, then the hypothesis $H_{0}$ is rejected with
probability $\xi(\omega)$. 

In this subsection, we set $f(t,y,z)=\gamma z^{2}$ and $g(t,y,z)=\eta z^{2}$ for $\gamma>0$ and $\eta>0$, and consider the following randomized test problem:
\begin{equation}
\label{g-object-qua1}
\left\{
\begin{aligned}
\inf\limits_{\xi\in U} & \ \mathcal{E}_{f}[1-\xi]\\
s.t. & \ \mathcal{E}_{g}[\xi]\leq\pi_{0},
\end{aligned}
\right.
\end{equation}
where $U=\{\xi\in L^{\infty}(\mathcal{F}_{T}): 0\leq\xi\leq1\}$ and $0<\pi_{0}<1$. If both $g$-expectations reduce to linear operators (i.e., $f(z)=\gamma z$ and $g(z)=\eta z$), the solution to  the above problem is given by the classical Neyman--Pearson lemma, and for this reason it is called a Neyman--Pearson problem under $g$-expectations. When the generators $f$ and $g$ are uniformly Lipschitz continuous with respect to $(y,z)$, the Neyman--Pearson lemma for the above problem has been established by Ji and Zhou \cite{Ji-Zhou-PT}. Here, we focus on generators that are quadratic functions with respect to $z$.

Using a standard exponential transformation, we derive
\[
\mathcal{E}_{f}[1-\xi]=\frac{1}{2\gamma}\ln E[e^{2\gamma(1-\xi)}]\quad \mbox{and}\quad\mathcal{E}_{g}[\xi]=\frac{1}{2\eta}\ln E[e^{2\eta\xi}].
\]
Thus, problem \eqref{g-object-qua1} is equivalent to
\begin{equation}
\left\{
\begin{aligned}
\inf\limits_{\xi\in U} & \ E[e^{2\gamma(1-\xi)}]\\
s.t. & \  E[e^{2\eta\xi}] \leq \tilde{\pi}_{0},
\end{aligned}
\right.
\end{equation}
where $\tilde{\pi}_{0}=e^{2\eta\pi_{0}}$.  Applying Proposition~\ref{prop-lagran}, we can derive an explicit solution for the randomized test problem as given in proposition below.

\begin{proposition} A randomized test 
	$\xi^{\ast}$ is optimal if and only if there exist a positive constant $v\in[\frac{\gamma}{\eta}e^{-2\eta},\frac{\gamma}{\eta}e^{2\gamma}]$ and an $\mathcal{F}_{T}$-measurable set $A$ such that
	\begin{align*}
	\xi^{\ast}=\frac{2\gamma-\ln(\frac{\eta v}{\gamma})}{2(\gamma+\eta)}I_{A}~~{and}~~P(A)=e^{2\eta\left( \pi_{0}-\frac{2\gamma-\ln(\frac{\eta v}{\gamma})}{2(\gamma+\eta)}\right) }.
	\end{align*}
\end{proposition}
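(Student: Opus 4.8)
The plan is to specialise Proposition~\ref{prop-lagran} to problem \eqref{g-object-qua1} and then to make the abstract characterisation \eqref{sou-lagran} explicit by solving the adjoint equations in closed form. First I would recast \eqref{g-object-qua1} as an instance of \eqref{g-object}: it corresponds to $h(x)=1-x$, $\alpha=0$, $X=0$ and $Y=1$, with $f(t,y,z)=\gamma z^{2}$ and $g(t,y,z)=\eta z^{2}$. Here $h$ is affine, hence convex, and strictly decreasing, while $f$ and $g$ are convex in $(y,z)$ and satisfy Assumptions~\ref{ass-qua} and \ref{ass-ini} (note $f_{y}=g_{y}=0$, $f_{z}=2\gamma z$, $g_{z}=2\eta z$). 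All hypotheses of Proposition~\ref{prop-lagran} are therefore met, so there is an optimal $\xi^{\ast}$ with $\mathcal{E}_{g}[\xi^{\ast}]=\pi_{0}$, a multiplier $\lambda>0$, and, since $h_{x}\equiv-1$, the representation $\xi^{\ast}=I_{\{\lambda m(T)-n(T)<0\}}+bI_{\{\lambda m(T)-n(T)=0\}}$.

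The second step is to evaluate $m$ and $n$. Because $f_{y}=g_{y}=0$, the adjoint equations \eqref{adjoint} reduce to $dn=2\gamma z_{1}^{\ast}n\,dW$ and $dm=2\eta z_{2}^{\ast}m\,dW$, so $n$ and $m$ are the positive stochastic exponentials $n(t)=\exp(2\gamma\int_{0}^{t}z_{1}^{\ast}\,dW_{s}-2\gamma^{2}\int_{0}^{t}|z_{1}^{\ast}|^{2}\,ds)$ and likewise for $m$ with $(\eta,z_{2}^{\ast})$. I would then identify these with the Cole--Hopf martingales of the two BSDEs: applying It\^{o}'s formula to $e^{2\gamma y_{1}^{\ast}(t)}$ cancels the quadratic drift of $y_{1}^{\ast}$ and shows that $e^{2\gamma y_{1}^{\ast}}$ is a positive martingale equal to $e^{2\gamma y_{1}^{\ast}(0)}n$, and similarly $e^{2\eta y_{2}^{\ast}}=e^{2\eta y_{2}^{\ast}(0)}m$. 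Evaluating at $T$ with $y_{1}^{\ast}(T)=1-\xi^{\ast}$ and $y_{2}^{\ast}(T)=\xi^{\ast}$, and using $e^{2\gamma y_{1}^{\ast}(0)}=E[e^{2\gamma(1-\xi^{\ast})}]$, $e^{2\eta y_{2}^{\ast}(0)}=E[e^{2\eta\xi^{\ast}}]$, yields the explicit densities $n(T)=e^{2\gamma(1-\xi^{\ast})}/E[e^{2\gamma(1-\xi^{\ast})}]$ and $m(T)=e^{2\eta\xi^{\ast}}/E[e^{2\eta\xi^{\ast}}]$.

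Substituting these densities into the switching condition is the heart of the argument. The sign of $\lambda m(T)-n(T)$ coincides with that of the map $x\mapsto\lambda\,E[e^{2\gamma(1-\xi^{\ast})}]\,e^{2\eta x}-E[e^{2\eta\xi^{\ast}}]\,e^{2\gamma(1-x)}$ evaluated at $x=\xi^{\ast}$, which is strictly increasing in $x$. Absorbing the two normalising expectations into a single cleaned multiplier $v:=\frac{\gamma}{\eta}\lambda\,E[e^{2\gamma(1-\xi^{\ast})}]/E[e^{2\eta\xi^{\ast}}]$, its unique zero is $c=\frac{2\gamma-\ln(\eta v/\gamma)}{2(\gamma+\eta)}$, so that $\{\lambda m(T)=n(T)\}=\{\xi^{\ast}=c\}=:A$. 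Strict monotonicity forces the regions $\{\lambda m(T)-n(T)<0\}=\{\xi^{\ast}<c\}$ and $\{\lambda m(T)-n(T)>0\}=\{\xi^{\ast}>c\}$ to carry the boundary values $1$ and $0$ respectively; since $c\in[0,1]$ this is compatible only if both are $P$-null, whence $\xi^{\ast}=c\,I_{A}$. Imposing the binding constraint $\mathcal{E}_{g}[\xi^{\ast}]=\pi_{0}$, equivalently $E[e^{2\eta\xi^{\ast}}]=e^{2\eta\pi_{0}}$, then fixes $P(A)$ and, after simplification, gives $P(A)=e^{2\eta(\pi_{0}-c)}$, while the admissibility $c\in[0,1]$ translates into the stated range $v\in[\frac{\gamma}{\eta}e^{-2\eta},\frac{\gamma}{\eta}e^{2\gamma}]$. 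For the converse I would start from any such pair $(v,A)$, read off $c$, and check that $\xi^{\ast}=cI_{A}$ reproduces the representation \eqref{sou-lagran} with a multiplier $\lambda$ recovered from $v$, so that the sufficiency half of Proposition~\ref{prop-lagran} applies.

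The step I expect to be most delicate is the self-referential nature of the characterisation: the thresholds $m(T)$ and $n(T)$ themselves depend on the unknown $\xi^{\ast}$, so the identity $\{\lambda m(T)=n(T)\}=\{\xi^{\ast}=c\}$ must be handled as a fixed-point relation rather than an a priori region. Making the monotonicity argument rigorous---so as to conclude that the two extreme-value regions are genuinely null and that the randomised test collapses to the constant level $c$ on $A$---is the crux, and it is the quadratic (as opposed to linear) nature of $f$ and $g$, reflected in the strict convexity of $x\mapsto e^{2\gamma(1-x)}$, that produces this degeneracy. A secondary technical point is the bookkeeping reconciling the Lagrange multiplier $\lambda$ of Proposition~\ref{prop-lagran} with the normalised multiplier $v$ appearing in the statement, which is precisely what absorbs the expectations $E[e^{2\gamma(1-\xi^{\ast})}]$ and $E[e^{2\eta\xi^{\ast}}]$.
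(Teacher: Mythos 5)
Your proposal is correct and follows essentially the same route as the paper: apply Proposition~\ref{prop-lagran} to \eqref{g-object-qua1}, turn the abstract switching condition $vm(T)-n(T)$ into the pointwise condition $\eta v e^{2\eta\xi^{\ast}}-\gamma e^{2\gamma(1-\xi^{\ast})}$, rule out the two strict-sign regions by the self-consistency (fixed-point) argument --- equivalently, by bounding $v$ so that the threshold $c$ lies in $[0,1]$ --- and then pin down $P(A)$ from the binding constraint $E[e^{2\eta\xi^{\ast}}]=e^{2\eta\pi_{0}}$. The only difference is expository: you make explicit the Cole--Hopf identification $n(T)=e^{2\gamma(1-\xi^{\ast})}/E[e^{2\gamma(1-\xi^{\ast})}]$ and $m(T)=e^{2\eta\xi^{\ast}}/E[e^{2\eta\xi^{\ast}}]$ and the absorption of the normalizing constants into the multiplier, steps the paper's proof uses implicitly.
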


\begin{proof}
	According to Proposition \ref{prop-lagran}, $\xi^{\ast}$ is  an optimal strategy if and only if there exists a positive constant $v>0$ such that
	\begin{align*}
	\xi^{\ast}= & I_{\{2\eta ve^{2\eta\xi^{\ast}}-2\gamma e^{2\gamma(1- \xi^{\ast})}<0\}}+bI_{\{2\eta v e^{2\eta\xi^{\ast}}-2\gamma e^{2\gamma(1- \xi^{\ast})}=0\}}
	\end{align*}
	and $\mathbb{E}[e^{2\eta\xi^{\ast}}] = \tilde{\pi}_{0}$, where $b$ is a random variable with $0\leq b\leq1.$ In the following, we can find upper and lower bounds for $v$. If $v<\frac{\gamma}{\eta}e^{-2\eta}$, then for any $\xi\in U$, it holds that
	\begin{align*}
	2\eta v e^{2\eta\xi}-2\gamma e^{2\gamma(1- \xi)}\leq2\eta v e^{2\eta}-2\gamma<0,
	\end{align*}
	and therefore, $\xi^{\ast} =1$, which contradicts  the constraint that $\mathcal{E}_g[\xi^*]\le \pi_{0}<1$. Thus, we must have $v\geq\frac{\gamma}{\eta}e^{-2\eta}$.  Similarly, we can  show that $v\leq\frac{\gamma}{\eta}e^{2\eta}$. Using the fact that $\frac{\gamma}{\eta}e^{-2\eta} \leq v \leq\frac{\gamma}{\eta}e^{2\gamma}$, we can further simplify the optimal randomized test as follows,
		\begin{align*}
	\xi^{\ast}= & b I_{\{\eta v e^{2\eta\xi^{\ast}}-\gamma e^{2\gamma(1- \xi^{\ast})}=0\}}\\
	= &  b I_{\{\eta v e^{2\eta\xi^{\ast}}-\gamma e^{2\gamma(1- \xi^{\ast})}=0\}\cap\{\xi^{\ast}=b \} }\\
	= &  \frac{2\eta-\ln(\frac{\eta v}{\gamma})}{2(\gamma+\eta)} I_{\{\xi^{\ast}=\frac{2\gamma-\ln(\frac{\eta v}{\gamma})}{2(\gamma+\eta)}\}}.
	\end{align*}
	Let $A=\{\omega:\xi^{\ast}=\frac{2\gamma-\ln(\frac{\eta v}{\gamma})}{2(\gamma+\eta)}\}$. It follows from $E[e^{2\eta\xi^{\ast}}] = \tilde{\pi}_{0}$ that
	\[
	P(A)=e^{2\eta\left( \pi_{0}-\frac{2\gamma-\ln(\frac{\eta v}{\gamma})}{2(\gamma+\eta)}\right) },
	\]
	and thus, the proof is complete. 
\end{proof}

	\subsection{Optimization problems under risk measures without cash-additivity}
	
	Recall that a risk measure $\rho(\cdot)$ is cash additive  if 	$$
	\rho(\xi+c)=\rho(\xi)+c, \text{ for } \xi \in L^{2}(\mathcal{F}_{T}) \text{ and } c\in \mathbb{R},
	$$
The assumption of cash additivity, which is also called  cash invariance or translation invariance, 
	is often motivated by the interpretation of
$\rho(\xi)$ as a capital requirement for the financial position $\xi$.
	
	Suppose that $f$ and $g$ are of the form  $f(t,y,z)=a_1y+b_1z^2$, $g(t,y,z)=a_2y+b_2z^2$ for constants $a_i,b_i,  i=1,2$.  Then Proposition 4.3 in \cite{ma} states that   the corresponding $g$-expectations are not cash additive   if $a_i \neq 0,i=1,2$.
	
	By taking $\alpha=0$ , the optimization problem  (\ref{g-object}) becomes
	\begin{equation} \label{eq-non-cash-add}
	\left\{
	\begin{aligned} \inf\limits_{\xi\in U} & \ \mathcal{E}_{f}[h(\xi)]\\ s.t. & \ \mathcal{E}_{g}[\xi]\leq\pi_{0}, \end{aligned} \right.
	\end{equation}
	where   $\pi_{0}$ is a constant,  and the admissible set $U$   is given by
	$$
	U=\{\xi\in L^{\infty}(\mathcal{F}_{T});X\leq\xi\leq Y\}
	$$
	with $X, Y\in L^{\infty}(\mathcal{F}_{T})$ and $X\leq Y$.   
	Under this setting,  the optimization problem related to  the risk measure without cash-additivity has the following result.
	\begin{proposition}
		\label{non-cash-add-pro}
		Suppose that $f$ and $g$ satisfy Assumptions \ref{ass-qua} and \ref{ass-ini} and $h$ obeys Assumption~\ref{ass-h}. If $\xi^{\ast}\in U$ is an optimal solution to (\ref{eq-non-cash-add}), then $\xi^{\ast}$ must be of the following form:
		\begin{align*}
		\xi^{\ast}= & YI_{\{h_{1}h_{x}(\xi^{\ast})n(T)+h_{2}m(T)<0\}}+XI_{\{h_{1}h_{x}(\xi^{\ast})n(T)+h_{2}m(T)>0\}}\\
		& +bI_{\{h_{1}h_{x}(\xi^{\ast})n(T)+h_{2}m(T)=0\}},
		\end{align*}
		where $h_{1}\geq0$ and  $h_{2}\in\mathbb{R}$ satisfy $h_1^2+h_2^2=1$, $b\in L^{\infty}(\mathcal{F}_{T})$ satisfies $X\leq b\leq Y$, and  $m(\cdot)$ and $n(\cdot)$ are given by the following adjoint equations:
		\begin{equation}\label{eq-adjoint-non-cash-add}
		\left\{
		\begin{aligned} dn(t)&=a_1n(t)dt+2b_1z_{1}^{\ast}(t)n(t)dW_{t},\\ dm(t)&=a_2m(t)dt+2b_2z_{2}^{\ast}(t)m(t)dW_{t},\\ n(0)&=1,\ \ m(0)=1. \end{aligned} \right.
		\end{equation}
		
	\end{proposition}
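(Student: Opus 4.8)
The plan is to recognize Proposition~\ref{non-cash-add-pro} as a direct specialization of Theorem~\ref{theorem} to the case $\alpha=0$ with the explicit generators $f(t,y,z)=a_1y+b_1z^2$ and $g(t,y,z)=a_2y+b_2z^2$. First I would confirm that these two generators fit the hypotheses of Theorem~\ref{theorem}. Since $f_y=a_1$ and $g_y=a_2$ are constants, they are trivially bounded and Lipschitz; since $f_z=2b_1z$ and $g_z=2b_2z$, they obey the linear-growth bound $|f_z|\le C(1+|z|)$ and the Lipschitz bound in the $z$-variable; and the quadratic growth $|f(t,y,z)|\le C(1+|y|+|z|^2)$ is immediate, so Assumption~\ref{ass-qua} holds. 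Assumption~\ref{ass-ini} is part of the stated hypotheses, so Theorem~\ref{theorem} applies to problem~\eqref{eq-non-cash-add}, which is exactly problem~\eqref{g-object} with $\alpha=0$.

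With the hypotheses in place, I would invoke Theorem~\ref{theorem} and then simplify. Setting $\alpha=0$ removes the term $\alpha h_1 m(T)E[n(T)]$ from the switching variable of Theorem~\ref{theorem}, so the three indicator sets are now governed by $h_2m(T)+h_1h_x(\xi^{\ast})n(T)$, which is precisely the switching variable in Proposition~\ref{non-cash-add-pro}. The scalars $h_1\ge0$, $h_2\in\mathbb{R}$ with $h_1^2+h_2^2=1$ and the random variable $b$ with $X\le b\le Y$ are inherited verbatim from Theorem~\ref{theorem}.

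It remains to identify the adjoint equations. Evaluating the partial derivatives at the optimal state gives $f_y(t,y_1^{\ast},z_1^{\ast})=a_1$, $f_z(t,y_1^{\ast},z_1^{\ast})=2b_1z_1^{\ast}$, $g_y(t,y_2^{\ast},z_2^{\ast})=a_2$ and $g_z(t,y_2^{\ast},z_2^{\ast})=2b_2z_2^{\ast}$; substituting these into the adjoint system~\eqref{adjoint} turns it into exactly~\eqref{eq-adjoint-non-cash-add}. This completes the identification and hence the proof.

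I expect no genuine obstacle, since all the analytic work — the variational estimates of Lemma~\ref{lemma estimation}, the Ekeland argument of Lemma~\ref{lemma-h1-h2}, and the pointwise optimization yielding the three-regime form of $\xi^{\ast}$ — is already contained in Theorem~\ref{theorem}. What remains is pure bookkeeping: simplifying the switching variable at $\alpha=0$ and reading off the derivatives of the two quadratic generators. The only step worth double-checking is that the sign conventions in the three indicator sets match between Theorem~\ref{theorem} and Proposition~\ref{non-cash-add-pro}, which they do.
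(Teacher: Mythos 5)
Your proposal is correct and matches the paper's intent exactly: the paper gives no separate proof of Proposition~\ref{non-cash-add-pro}, presenting it as an immediate specialization of Theorem~\ref{theorem} to $\alpha=0$ with $f(t,y,z)=a_1y+b_1|z|^2$ and $g(t,y,z)=a_2y+b_2|z|^2$, which is precisely the verification-plus-substitution you carry out. Your checks of Assumption~\ref{ass-qua} and the identification of the adjoint system via $f_y=a_1$, $f_z=2b_1z_1^{\ast}$, $g_y=a_2$, $g_z=2b_2z_2^{\ast}$ are exactly the bookkeeping the paper leaves implicit.
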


\section*{Acknowledgement}
We are highly grateful to Prof. Chengguo Weng for his helpful suggestions and comments.  The authors moreover thank two anonymous referees for their helpful remarks and comments.

\end{document}